\newcommand{\MR}[1]{\href{http://www.ams.org/mathscinet-getitem?mr=#1}{MR#1}}
\newcommand{\csee}[1]{\textup(see \cref{#1}\textup)}
\newcommand{\fullcsee}[2]{\textup(see \fullcref{#1}{#2}\textup)}
 \renewcommand{\Cref}{\cref}
 \newcommand{\pref}[1]{(\ref{#1})}
 \newcommand{\fullcref}[2]{\cref{#1}\pref{#1-#2}}
 \newcommand{\fullCref}[2]{\Cref{#1}\pref{#1-#2}}
\theoremstyle{plain}
\newtheorem{thm}[equation]{Theorem}
\newtheorem{prop}[equation]{Proposition}
\newtheorem{cor}[equation]{Corollary}
\newtheorem{lem}[equation]{Lemma}
\newtheorem{FGL}[equation]{Lemma}
\crefname{cor}{Corollary}{Corollaries}
\Crefname{cor}{Corollary}{Corollaries}
\crefname{lem}{Lemma}{Lemmas}
\Crefname{lem}{Lemma}{Lemmas}
\crefname{prop}{proposition}{propositions}
\Crefname{prop}{Proposition}{Propositions}
\theoremstyle{definition}
\newtheorem{attack}[equation]{Method of attack}
\newtheorem{convention}[equation]{Convention}
\newtheorem{notation}[equation]{Notation}
\newtheorem{defn}[equation]{Definition}
\newtheorem{assump}[equation]{Assumption}
\theoremstyle{definition}
\newtheorem{rem}[equation]{Remark}
\newtheorem{rems}[equation]{Remarks}
\newenvironment{justification}{\begingroup \begin{proof}}{\end{proof}\endgroup}
\DeclareMathOperator{\Cay}{Cay}
\DeclareMathOperator{\norm}{Norm}
\DeclareMathOperator{\volt}{volt}
\newcommand{\iso}{\cong}
\newcommand{\quot}{\overline}
\newcommand{\GAP}{\filename{GAP}}
\newcommand{\LKH}{\filename{LKH}}
\newcommand{\CC}{\mathbb{C}}
\newcommand{\ZZ}{\mathbb{Z}}
\newcommand{\normal}{\triangleleft}
\newcommand{\normaleq}{\trianglelefteq}
\newcommand{\notnormal}{\mathrel{\mkern-3mu\not\mkern-3mu\normal}}
\newcommand{\filename}[1]{\texttt{#1}}
\newcommand{\function}[1]{\textsf{#1}}
\newcommand{\noprelistbreak}{\smallskip\@nobreaktrue\nopagebreak} 
\newcommand{\maynewline}{\hfil\ \penalty1000\hfilneg\ }
\renewcommand{\subsection}{%
\@startsection{subsection}{2}{0pt}{-\baselineskip}{0.5\baselineskip}{\mathversion{bold}\bfseries}%
}
\def\@journalname{}
\def\@journalyear{}
\def\@oddrunninghead{Dave Witte Morris and Kirsten Wilk}
\def\@evenrunninghead{Cayley graphs of order $k p$ are hamiltonian for $k < 48$}
\renewcommand{\ps@amctitle}{%
    \renewcommand\@oddhead{}
    \let\@evenhead\@oddhead
    \renewcommand\@evenfoot{}
    \let\@oddfoot\@evenfoot
  }
\renewcommand{\authordatanoaffil}[3]{%
\advance\@authorcount by 1%
\xdef\@emails{\@emails\ifnum\@authorcount>1 , \fi #2 (#1)}%
\xdef\@authors{\@authors\ifnum\@authorcount>1 , \fi {#1}}%
\if!#3!
{\large #1}
\else
\global\advance\@footcount by 1%
{\large #1} \symbolfootnote[\@footcount]{#3}%
\fi}
\renewcommand{\authordatatwo}[7]{
\begin{center}
\authordatanoaffil{#1}{#2}{#3} and
\authordatanoaffil{#4}{#5}{#6}\\[1.5mm]
\affiliation{#7}
\end{center}}
\renewenvironment{abstract}
{
\hrule height 0.25pt
\vskip 5pt
\noindent \textbf{Abstract}
\vskip 5pt
}
{
\vskip 5pt
\noindent \textit{\small Keywords:~\@keywords}

\vskip 3pt
\noindent \textit{\small Math.\ Subj.\ Class.: \@msc}
\vskip 5pt
\hrule height 0.25pt
} 
\renewcommand{\section}{%
\@startsection{section}{1}{0pt}{-\baselineskip}{0.5\baselineskip}{\large\bfseries}%
}
\renewenvironment{frontmatter}
{\thispagestyle{amctitle}}
{\vskip 20pt%
\blfootnote{\raggedright \ifnum\@authorcount=1\textit{E-mail address:}\else\textit{E-mail addresses:}\fi ~\@emails\\ \hskip\parindent\textit{URL:} \href{http://people.uleth.ca/~dave.morris/}{http://people.uleth.ca/{\smaller$\sim$}\hskip0.5pt dave.morris/}
\hfill \textsf{preprint version of April 2018}
}}
\renewcommand{\titledata}[2]{%
\global\def\p@pertitle{#1}%
\vskip 5mm
\begin{center}\LARGE\bfseries\mathversion{bold}
#1\if!#2!\else
\global\advance\@footcount by 1%
\symbolfootnote[\@footcount]{#2}\fi\end{center}%
\vskip 1mm
}
\begin{document}

\larger 

\begin{frontmatter}

\titledata{Cayley graphs of order $k p$ are hamiltonian for $k < 48$}
{}

\authordatatwo{Dave Witte Morris}{Dave.Morris@uleth.ca}{}
{Kirsten Wilk}{kirsten.wilk@uleth.ca}{}
{Department of Mathematics and Computer Science, University of Lethbridge, Lethbridge, Alberta, T1K\,6R4, Canada}

\keywords{Cayley graph, hamiltonian cycle, hamiltonian connected, hamiltonian laceable}
\msc{05C25, 05C45}

\begin{abstract}
We provide a computer-assisted proof that if $G$ is any finite group of order $kp$, where $1 \le k < 48$ and $p$~is prime, then every connected Cayley graph on~$G$ is hamiltonian (unless $kp = 2$).
As part of the proof, it is verified that every connected Cayley graph of order less than~48 is either hamiltonian connected or hamiltonian laceable (or has valence $\le 2$). 
\end{abstract}

\end{frontmatter}

\section{Introduction}

In a series of papers \cite{M2Slovenian-LowOrder, CurranMorris2-16p, GhaderpourMorris-27p, GhaderpourMorris-30p}, it was shown that if $1 \le k < 32$ (with $k \neq 24$) and $p$~is any prime number, then every connected Cayley graph on every group of order\/~$k p$ has a hamiltonian cycle (unless $kp = 2$). This note extends that work, by treating the previously excluded case $k = 24$, and by increasing the upper bound on~$k$:

\begin{thm} \label{kp}
If\/ $1 \le k < 48$, and $p$~is any prime number, then every connected Cayley graph on every group of order~$k p$ has a hamiltonian cycle \textup(unless $kp = 2$\textup).
\end{thm}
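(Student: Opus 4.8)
The plan is to argue by induction on $|G|$, reducing $G$ to a quotient of order less than~$48$ and then lifting a hamiltonian cycle back up with the Factor Group Lemma. Let $G$ have order $kp$ with $k<48$, let $S$ be a connecting set, and choose $p$ to be a \emph{largest} prime divisor of~$|G|$, so that $k=|G|/p<48$. If $|G|<48$ we are in the base case: the computer verification quoted above tells us that $\Cay(G;S)$ is hamiltonian connected or hamiltonian laceable (barring valence $\le 2$), and in particular hamiltonian; the sole genuine exception $kp=2$ is set aside at the outset. So assume $|G|\ge 48$, whence $p\ge 2$ and $\quot G=G/P$ is a proper quotient to which the base case will apply.

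First I would dispose of the generic case $p>k$. Then Sylow's theorem forces the number of Sylow $p$-subgroups to be a divisor of~$k$ that is congruent to~$1$ modulo~$p$; since $1\le k<p$, that number is~$1$, so the Sylow $p$-subgroup~$P$ is normal and, as $p\nmid k$, cyclic of order~$p$. The quotient $\quot G$ then has order $k<48$, so $\Cay(\quot G;\quot S)$ is hamiltonian connected or hamiltonian laceable. The goal is to produce a hamiltonian cycle in~$\quot G$ whose \emph{voltage} — the product in~$G$ of the generators read around the cycle — is a nontrivial element of~$P$; because $|P|=p$ is prime, any nontrivial element generates~$P$, and the Factor Group Lemma then lifts the cycle to a hamiltonian cycle in $\Cay(G;S)$.

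The heart of the matter is arranging that nontrivial voltage, and this is exactly where the strong conclusion of the computer search earns its keep. If some $s\in S$ already lies in $P\setminus\{e\}$, then $\quot s=\quot e$ and $s$ generates~$P$; one lifts any hamiltonian cycle of~$\quot G$ to $p$ vertex-disjoint paths through the fibres and stitches them into a single cycle using the in-fibre $s$-edges, hamiltonian connectedness supplying the freedom to align the endpoints. Otherwise every generator maps to a nontrivial element of~$\quot G$; here I would fix an $s\in S$, use hamiltonian connectedness (respectively laceability, in the bipartite case) to choose a hamiltonian path of~$\quot G$ from $\quot e$ to $\quot s$, and close it with the edge~$\quot s$. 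If the resulting voltage is trivial I would reroute the path — the abundance of distinct hamiltonian paths guaranteed by the strong property lets one shift the voltage by a nontrivial element of~$P$ — until a generator of~$P$ is obtained. The dichotomy between hamiltonian connectedness (non-bipartite quotients) and hamiltonian laceability (bipartite quotients) has to be tracked carefully throughout, since the admissible endpoint pairs differ and the relevant cosets must be compatible with the bipartition.

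The remaining, and genuinely harder, situation is $p\le k$, where~$P$ need not be normal of prime order. Here $|G|=kp\le 47^2$ is bounded and every prime divisor of~$|G|$ is at most~$47$, so only finitely many orders arise; for these I would instead locate a different cyclic normal subgroup~$N$ — a normal Sylow subgroup for another prime, or a minimal normal subgroup — with $|G/N|<48$, and run the same Factor Group Lemma lift, falling back on the inductive hypothesis for any intermediate quotient that is itself of the form $k'p'$. I expect this last case to be the main obstacle: producing a suitable normal subgroup uniformly across all group structures of order below~$48$, while keeping the voltage a generator of the kernel and respecting the bipartite/non-bipartite split, is precisely the delicate bookkeeping for which the computer assistance does the decisive work.
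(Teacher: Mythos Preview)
Your overall architecture is right and matches the paper's: induct, reduce to the case where $\ZZ_p\normal G$ with $p\nmid k$, and lift via the Factor Group Lemma using strong hamiltonicity properties of $\Cay(\quot G;\quot S)$ for $|\quot G|<48$. The gap is the step where you ``reroute the path\dots\ until a generator of~$P$ is obtained.'' Hamiltonian connectedness gives you many hamiltonian cycles in $\Cay(\quot G;\quot S)$, but it gives you no control whatsoever over their voltages in~$G$. There are in fact pairs $(\quot G,\quot S,\tau)$ for which \emph{every} hamiltonian cycle of $\Cay(\quot G;\quot S)$ has trivial voltage in $\ZZ_p\rtimes_\tau\quot G$; the paper isolates these (its Lemma on special irredundant cases) and disposes of them by ad~hoc constructions. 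So ``rerouting'' cannot succeed as stated, and this is not a detail that more bookkeeping fixes.

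What the paper does instead is split on whether $\quot S$ is irredundant. If $\quot S$ is \emph{redundant}, one can arrange (after conjugation) that $S=(\{0\}\times\quot{S_0})\cup\{(1,\quot a)\}$ with $\quot{S_0}$ already generating~$\quot G$; then a hamiltonian path in $\Cay(\quot G;\quot{S_0})$ from $\quot e$ to $\quot a^{-1}$ closed by the $\quot a$-edge has voltage exactly the $\ZZ_p$-coordinate of~$a$, which is nonzero by construction. This is the clean version of your ``$s\in P$'' idea, and here hamiltonian connectedness/laceability of the \emph{smaller} graph $\Cay(\quot G;\quot{S_0})$ is what is used (with separate computer treatment of the bipartite/non-bipartite mismatch and the valence-$2$ case). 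If $\quot S$ is \emph{irredundant}, no such trick is available: the paper builds a universal semidirect product $Z\rtimes_\zeta\quot G$ over a ring of roots of unity, computes a matrix of voltages of several hamiltonian cycles, and shows its determinant has nonzero norm; this handles all primes at once except the finitely many dividing that norm, which are then finished by \LKH. The exceptional $(\quot G,\quot S,\tau)$ with identically-zero voltage are exactly those not covered by this determinant argument. Your proposal lacks both this irredundant/redundant dichotomy and the universal-voltage mechanism, and without them the nonzero-voltage step does not go through. (Your $p\le k$ case is also not really an argument: the paper simply verifies those finitely many orders directly with \LKH\ rather than hunting for an alternative normal cyclic subgroup, which need not exist with $|G/N|<48$.)
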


All of the results in the previous papers \cite{M2Slovenian-LowOrder, CurranMorris2-16p, GhaderpourMorris-27p, GhaderpourMorris-30p} were verified  by hand. However, some of the proofs are quite lengthy, so many details were probably never checked by anyone other than the authors and the referees. The present paper takes the opposite approach: many of the results have not been verified by hand, but all of the source code is available in 
	$$ \text{$\langle$\,\emph{the ancillary files directory of this paper on the arxiv}\,$\rangle$} $$
so the results can easily be reproduced by anyone with a standard installation of the computer algebra system \GAP\ \cite{GAP} (including the Small Groups package \cite{SmallGroups}) and G.\,Helsgaun's implementation \LKH\ \cite{LKH} of the Lin-Kernighan heuristic for the traveling salesperson problem.
An effort was made to keep the algorithms in this paper simple, so they would be easy to verify, even though this precluded many optimizations.

In addition to extending the above-mentioned results for $k < 32$, the present work also provides an independent verification of those results, because the proofs are essentially self-contained (other than relying heavily on the correctness of extensive \GAP\ computations).
We also establish the following two results of independent interest:

\begin{cor} \label{ham<144}
If\/ $|G| < 144$ \textup(and $|G| > 2$\textup), then every connected Cayley graph on~$G$ is hamiltonian.
\end{cor}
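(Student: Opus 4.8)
The plan is to obtain \cref{ham<144} as an almost immediate consequence of \cref{kp}. Given a group $G$ with $2 < |G| < 144$, I would try to write $|G| = kp$ with $p$ prime and $1 \le k < 48$, so that \cref{kp} applies verbatim. The natural choice is to take $p$ to be the \emph{largest} prime divisor of $|G|$ and to set $k = |G|/p$; since enlarging $p$ decreases $k$, this choice minimizes $k$ and therefore gives the best chance of landing in the range $k < 48$. If it succeeds for a given order, no further work is needed there.

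The verification that $k < 48$ is a short case analysis on $p$. If $p \ge 5$, then $k = |G|/p \le 143/5 < 48$. If $p = 3$, then $k = |G|/3 < 144/3 = 48$, using $|G| < 144$; this is exactly where the hypothesis $|G| < 144$ is sharp, since at $|G| = 144 = 48 \cdot 3$ the bound first fails. The only remaining case is $p = 2$, meaning $|G|$ is a power of $2$; the powers of $2$ strictly between $2$ and $144$ are $4, 8, 16, 32, 64, 128$, and all of them satisfy $|G|/2 < 48$ except $|G| = 128$, where $k = 64$. Thus every admissible order other than $128$ has the form $kp$ with $k < 48$, and \cref{kp} produces a hamiltonian cycle.

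The one obstacle is the exceptional order $|G| = 128 = 2^7$, which genuinely falls outside \cref{kp}: its only prime divisor is $2$, forcing $k = 64 \ge 48$. I would dispose of it by invoking the classical fact that every connected Cayley graph on a group of prime-power order is hamiltonian; since such a $G$ is a $2$-group, this settles the last case and completes the proof. The work is thus entirely in isolating $128$ as the sole order below $144$ that resists the $kp$ formulation---once this is seen, it is handled by an existing $p$-group result rather than any new computation.
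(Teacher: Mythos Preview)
Your proof is correct and follows essentially the same route as the paper: apply \cref{kp} whenever $|G|$ has an odd prime factor (so that $k = |G|/p < 144/3 = 48$), and fall back on the prime-power hamiltonicity theorem for the leftover $2$-group case. The paper's only cosmetic difference is that it disposes of \emph{all} $2$-groups at once via the prime-power result rather than isolating $|G| = 128$, which lets it avoid your separate $p \ge 5$ versus $p = 3$ split.
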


\begin{prop}  \label{hamconn<64}
If\/ $|G| < 48$, then every connected Cayley graph on~$G$ is either hamiltonian connected or hamiltonian laceable \textup(or has valence $\le 2$\textup).
\end{prop}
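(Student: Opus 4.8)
The plan is to turn the proposition into a finite, explicitly enumerated computation and to certify it with \GAP\ and \LKH. The governing observation is a dichotomy: a bipartite graph can never be hamiltonian connected, because any hamiltonian path alternates between the two parts and so joins only vertices in opposite parts; hence, for a connected Cayley graph of valence $\ge 3$, the assertion reduces to the single claim that the graph is hamiltonian laceable exactly when it is bipartite and hamiltonian connected exactly when it is not. Now $\Cay(G;S)$ is bipartite precisely when $G$ has a subgroup~$H$ of index~$2$ with $S \cap H = \emptyset$, in which case the parts are the two cosets of~$H$; this is a condition I can read off directly in \GAP. Finally, since left translation is an automorphism, every Cayley graph is vertex-transitive, so I may fix one endpoint of the desired path to be the identity~$e$: laceability then means exhibiting a hamiltonian path from~$e$ to each of the $|G|/2$ vertices of the opposite part, and hamiltonian connectedness means exhibiting one from~$e$ to each of the remaining $|G|-1$ vertices.

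Next I would enumerate the cases. Using the Small Groups library in \GAP\ I list every group~$G$ with $4 \le |G| < 48$ (smaller orders force valence $\le 2$), and for each I enumerate the inverse-closed generating sets $S$ with $e \notin S$, $\langle S\rangle = G$, and $|S| \ge 3$, keeping only one representative from each orbit under the automorphisms of~$G$, since $\Cay(G;S) \iso \Cay(G;\alpha S)$. To shorten this list I use monotonicity: enlarging~$S$ only adds edges, so a hamiltonian path in $\Cay(G;S)$ persists in $\Cay(G;S')$ for every $S' \supseteq S$. Consequently, once a graph has been certified hamiltonian connected all of its edge-supergraphs are too, and a bipartite laceable graph remains bipartite and laceable under any enlargement of~$S$ that stays disjoint from~$H$. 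Processing generating sets in order of increasing size, I therefore need to run the search only on those~$S$ for which no already-certified proper generating subset witnesses the property.

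For each such graph I produce the required certificates with \LKH. To find a hamiltonian $e$--$v$ path I adjoin an auxiliary vertex adjacent only to~$e$ and~$v$, assign cost~$0$ to every edge of the graph and cost~$1$ to every non-edge, and ask \LKH\ for a minimum-cost hamiltonian cycle; a returned tour of cost~$0$ restricts to a genuine hamiltonian $e$--$v$ path, which I then re-verify independently in \GAP. Because \LKH\ is a heuristic it certifies existence but not non-existence; this is all that is needed here, since the proposition asserts that the paths always exist, so every call is expected to succeed and any that failed could be rerun with an exact backtracking search.

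The main obstacle is not any individual step but the bookkeeping that keeps the computation simultaneously complete and tractable. For tractability, hamiltonian connectedness of the larger groups demands up to $|G|-1 \approx 46$ separate path searches per generating set, summed over all groups and all generating-set orbits, so the reduction by automorphisms and the pruning by monotonicity are essential rather than cosmetic. For completeness, the delicate point is the interface between the bipartite and non-bipartite worlds: since a bipartite graph is never hamiltonian connected, monotonicity can transfer connectedness only from a \emph{non-bipartite} generating subgraph, while it can transfer laceability only within a \emph{fixed} bipartition. Thus any graph whose proper generating subsets all fail to lie in the appropriate class---because they have valence $\le 2$, or are bipartite while the graph itself is not---must be certified directly. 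Arranging this case analysis so that no connected Cayley graph of order below~$48$ is silently omitted is the crux of the argument.
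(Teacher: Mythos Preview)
Your proposal is correct and follows essentially the same computational strategy as the paper: enumerate groups and generating sets up to automorphism, use vertex-transitivity to fix one endpoint at~$e$, certify the required hamiltonian paths with \LKH, and handle with care the minimal cases where monotonicity fails---namely where every proper generating subset either has valence $\le 2$ or is bipartite while the graph itself is not. The only implementation differences are cosmetic: the paper finds an $e$--$v$ path by adding the edge $ev$ and forcing it into an \LKH\ cycle rather than adding an auxiliary vertex, and it organizes the enumeration as ``irredundant generating sets, then one-element nonbipartite extensions, then one-element extensions of valence-$2$ sets'' rather than your size-ordered monotonicity pruning, but these amount to the same case analysis.
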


\begin{rems} \ 
\noprelistbreak
	\begin{enumerate}
	
	\item The definition of the terms ``hamiltonian connected'' and ``hamiltonian laceable'' can be found in \cref{HamConnLaceDefn}.
	
	\item Almost all of this paper is devoted to the proof of \cref{kp}. \Cref{ham<144,hamconn<64} are proved in \cref{HamConnLaceSect}.
	
	\item It is explained in \cref{ReduceCases} that the paper's calculations for the proof of \cref{kp} could be substantially shortened by accepting all of the results in the literature, rather than reproving some of them. For example, instead of treating all values of~$k$ from~$1$ to~$47$, it would suffice to consider only $k \in \{24, 32, 36, 40, 42, 45\}$ \fullcsee{CanAlsoAssume}{k}
	
	\item It is natural to ask whether the conclusion of \cref{hamconn<64} holds for all Cayley graphs, without any restriction on the order (cf.\ \cite[Questions 4.1 and~4.3, pp.~121--122]{DupuisWagon-Laceable}). This is known to be true when $G$ is abelian \cite{ChenQuimpo} and for a few other (very restricted) classes of Cayley graphs \cite{Alspach-CoxeterI, AlspachChenMcAveney-HamLace, AlspachQin-HamConn, Araki-HyperHamLace}, but \cref{hamconn<64} seems to be the first exhaustive examination of this topic for Cayley graphs of small order.
	Further calculations reported that the conclusion of \cref{hamconn<64} holds for all orders less than $108$, but the additional computations took several weeks and were marred by crashes and other issues, so they are not definitive.

	\end{enumerate}
\end{rems}

\begin{attack}
For each fixed~$k$ and prime number~$p$, there are only finitely many groups~$G$ of order~$kp$ (up to isomorphism), and each of these groups has only finitely many Cayley graphs. Assuming that $kp$ is not too large, \LKH\ can find a hamiltonian cycle in all of them. This means that (given sufficient time) a computer can deal with any finite number of primes. 

Therefore, large primes are the main concern. For these, we have the helpful observation that if $G$ is a group of order~$kp$, where $p$~is prime and $p > k$, then $G$ has a unique Sylow $p$-subgroup (so the Sylow $p$-subgroup is normal), and the Sylow $p$-subgroup is \textup(isomorphic to\textup)~$\ZZ_p$. This means that, after some computer calculations to eliminate the small cases \csee{anomalousSect}, 
	$$ \text{we may assume $\ZZ_p \normal G$, and $p \nmid k$}. $$
 For convenience, 
 	$$ \text{let $\quot G = G/\ZZ_p$, so $|\quot G| = k$.} $$
Since $\ZZ_p$ is cyclic, we are in position to apply the \cref{FGL}: it suffices to find a hamiltonian cycle in $\Cay\bigl( \quot G; \quot S \bigr)$ whose voltage generates~$\ZZ_p$.

There are infinitely many primes~$p$, so a given group~$\quot G$ of order~$k$ is the quotient of infinitely many different groups~$G$. In order to deal simultaneously with all primes, first note that the Schur-Zassenhaus Theorem \cite{Wikipedia-SchurZassenhaus} tells us that $G$ is a semidirect product: $G = \ZZ_p \rtimes_\tau \quot G$ \fullcsee{CanAssume}{semiprod}. We construct a single ``universal'' (infinite) semidirect product $\widetilde G = Z \rtimes_{\widetilde\tau} \quot G$ that has every $\ZZ_p \rtimes_\tau \quot G$ as a quotient. (For example, if all values of the twist homomorphism~$\tau$ are $\pm1$, then $\widetilde G = \ZZ \rtimes_\tau \quot G$.) 

In almost all cases, a computer search yields a hamiltonian cycle $H$ in $\Cay\bigl( \quot G; \quot S \bigr)$, such that its voltage~$\widetilde v$ in $Z \rtimes_{\widetilde\tau} \quot G$ is nonzero. Then $H$ has nontrivial voltage in $\ZZ_p \rtimes_\tau \quot G$ unless $p$ is one of the finitely many prime divisors of~$\widetilde v$.  \LKH\ can verify that all of the (finitely many) Cayley graphs corresponding to these primes are hamiltonian. 
Fortunately, theoretical arguments can handle the few situations where the computer search was unable to find any hamiltonian cycles with nonzero voltage \csee{IrredSpecialCases}.
\end{attack}

\tableofcontents

\section{Preliminaries}

\begin{notation} \label{GSNotation} \ 
	\begin{enumerate}
	\item $G$ is always a group of order~$kp$, where $p$~is prime,
	\item $S$ is a generating set of~$G$,
	and
	\item $\Cay(G;S)$ is the \emph{Cayley graph} on~$G$ with respect to the generators~$S$. The vertices of this graph are the elements of~$G$, and there is an edge joining $g$ and~$sg$ whenever $g \in G$ and $s \in S$.
	\end{enumerate}
\end{notation}

\begin{rem}
Unlike most authors, we do not require $S$ to be symmetric (i.e., closed under inverses). Instead, in our notation, $\Cay(G;S) = \Cay(G;S \cup S^{-1})$.
\end{rem}

Hamiltonian cycles in a subgraph are also hamiltonian cycles in the ambient graph, so, in order to prove \cref{kp}, there is no harm in making the following assumption:

\begin{assump} \label{AssumeIrredundant}
The generating set~$S$ of~$G$ is \emph{irredundant}, in the sense that no proper subset of~$S$ generates~$G$.
\end{assump}

As mentioned in the introduction, the paper relies heavily on the computer algebra system \GAP\ \cite{GAP} and G.\,Helsgaun's implementation \LKH\ of the Lin-Kernighan heuristic \cite{LKH}.

\subsection{\GAP}
The Small Groups library in \GAP\ contains all of the groups of order less than 1024, and many others \cite{SmallGroups}. The number of groups of order~$k$ is given by the function
	\function{NumberSmallGroups(k)}, 
and each group of order~$k$ has a unique id number (from 1 to \function{NumberSmallGroups(k)}). The \GAP\ function
	$$ \function{SmallGroup(k, id)} $$
constructs the group of order~$k$ with the given id number.

The \GAP\ package \textsf{grape} provides tools for working with graphs. In particular, it defines the function
	$$ \function{CayleyGraph(G,S)} $$
that constructs the Cayley graph of the group~$G$ with respect to the generating set~$S$.

To prove \cref{kp}, we wish to show, for certain groups~$G$, that all of the Cayley graphs $\Cay(G;S)$ are hamiltonian. With \cref{AssumeIrredundant} in mind, we would like to have a list of all of the irredundant generating sets of~$G$. However, there is no need to distinguish between Cayley graphs that are isomorphic, so we consider two generating sets to be equivalent if one can be obtained from the other by applying an automorphism of~$G$. Furthermore, since $\Cay(G;S) = \Cay(G;S \cup S^{-1})$, we also consider two generating sets to be equivalent if one can be obtained from the other by replacing some elements by their inverses. The function
	$$ \function{IrredUndirGenSetsUpToAut(G)} $$
constructs a list of all of the irredundant generating sets of~$G$, up to equivalence. It is defined in the file \filename{UndirectedGeneratingSets.gap} and is adapted from the \function{AllMinimalGeneratingSets} algorithm in the masters thesis of B.\,Fuller \cite[pp.~31--34]{Fuller-CCAThesis}. (Fuller's program does not allow generators to be replaced by their inverses.)

Combining \function{IrredUndirGenSetsUpToAut(G)} with \function{CayleyGraph(G,S)} provides a list of all of the irredundant Cayley graphs on any group~$G$.

\subsection{Finding hamiltonian cycles with \LKH\ and exhaustive search}
G.\,Helsgaun's \cite{LKH} implementation \LKH\ of the Lin-Kernighan heuristic is a very powerful tool for finding hamiltonian cycles, and the function
	$$ \text{\function{LKH(X, AdditionalEdges, RequiredEdges)}} $$
interfaces \GAP\ with this program. (It is defined in the file \filename{LKH.gap}.) Given a graph~$X$ (in \textsf{grape} format), and two lists of edges, the function constructs a graph $X^+$ by adding the edges in \function{AdditionalEdges} to~$X$, and asks \LKH\ to find a hamiltonian cycle in~$X^+$ that contains all of the edges in \function{RequiredEdges}. If $X = \function{CayleyGraph(G,S)}$, then the hamiltonian cycle is returned as a list of elements of~$G$, in the order that they are visited by the cycle.

For example, the function
	$$ \function{IsAllHamiltonianOfTheseOrders(OrdersToCheck)} $$
uses \function{LKH} (together with \function{IrredUndirGenSetsUpToAut(G)} with \function{CayleyGraph(G,S)}) to verify that every Cayley graph of order~$k$ is hamiltonian, for every~$k$ in the list \function{OrdersToCheck}. (It is defined in the file \filename{IsAllHamiltonianOfTheseOrders.gap}.)

\function{LKH} returns a single hamiltonian cycle, but we sometimes want several hamiltonian cycles, in order to find one whose voltage is nonzero. The function 
	$$ \function{HamiltonianCycles(X, RequiredEdges)} $$
finds all of the hamiltonian cycles in~$X$ that contain all of the edges in the list \function{RequiredEdges}. (It is defined in the file \filename{HamiltonianCycles.gap}.) However, the list of all hamiltonian cycles may be unreasonably long (and may take too long to compute), so we instead rely on two functions that provide a fairly short list of hamiltonian cycles that suffice for the task at hand:
	$$ \begin{array}{l}
	\function{SeveralHamCycsInCay(GBar,SBar)} \\
	\function{SeveralHamCycsInRedundantCay(GBar, S0Bar, a)}
	\end{array} $$
(Both of these functions are defined in the file \filename{SeveralHamCycsInCay.gap}.) The first provides a list of hamiltonian cycles in $\Cay(\quot G; \quot S)$, whereas the second provides hamiltonian cycles in $\Cay\bigl(\quot G; \quot{S_0} \cup \{\quot a \} \bigr)$. 

\begin{rem}
In order to verify the correctness of the results in this paper, it is not necessary to verify the correctness of the source code of any of the four functions that provide hamiltonian cycles. This is because the output of these functions is always checked for validity before it is used; the function
	$$ \function{IsHamiltonianCycle(X, H, AdditionalEdges, RequiredEdges)} $$
was written for this purpose. It verifies that $H$ is a hamiltonian cycle in the graph~$X^+$ that is obtained from~$X$ by adding the edges in the list \function{AdditionalEdges}, and also that $H$ contains all of the edges in the list \function{RequiredEdges}. Our convention is that each edge $[u,v]$ in \function{AdditionalEdges} and \function{RequiredEdges} is considered to be directed, unless $[v,u]$ is also in the list, in which case the edge is undirected.
\end{rem}

\subsection{Some Cayley graphs that are hamiltonian connected/laceable} \label{HamConnLaceSect}

\begin{defn}[{}{\cite[Defn.~1.3]{AlspachQin-HamConn}}] \label{HamConnLaceDefn}
Let $X$ be a graph.
	\begin{enumerate}
	\item $X$ is \emph{hamiltonian connected} if $X$ has a hamiltonian path from~$v$ to~$w$, for all vertices $v$ and~$w$, such that $v \neq w$.
	\item $X$ is \emph{hamiltonian laceable} if $X$ is bipartite, and it has a hamiltonian path from~$v$ to~$w$, for all vertices $v$ and~$w$, such that $v$ and~$w$ are not in the same bipartition set.
	\end{enumerate}
\end{defn}

\begin{proof}[Justification of \cref{hamconn<64}]
It is easy to write a \GAP\ program that
	\begin{itemize}
	\item loops through all groups~$G$ of order $< 64$,
	\item loops through all irredundant generating sets~$S_0$ of~$G$,
	and
	\item uses \function{LKH} to verify that $\Cay(G;S_0)$ is hamiltonian connected/laceable if the valence is $\ge 3$.
	\end{itemize}
Cayley graphs are vertex transitive, so, for the last step, it suffices to find a hamiltonian path from the identity element~$e$ to all other elements~$a$ of~$G$ (for hamiltonian connectivity) or to all elements~$a$ of the other bipartition set (for hamiltonian laceability). To find this hamiltonian path, one can ask \function{LKH} to find a hamiltonian cycle in the graph $X \cup \{ea\}$, such that the hamiltonian path contains the edge $ea$. (Note that, by symmetry, there is no need to find hamiltonian paths to both of $a$ and~$a^{-1}$.)

However, this is not sufficient to establish \cref{hamconn<64}.
Any generating set~$S$ of~$G$ contains an irredundant generating set~$S_0$, and it is obvious that:
	\begin{itemize}
	\item If $\Cay(G;S_0)$ is hamiltonian connected, then $\Cay(G;S)$ is hamiltonian connected. 
	\item If $\Cay(G;S_0)$ is hamiltonian laceable, \emph{and} $\Cay(G;S)$ is bipartite, then $\Cay(G;S)$ is hamiltonian laceable. 
	\end{itemize}
But it may be the case that $\Cay(G;S_0)$ is bipartite and $\Cay(G;S)$ is not bipartite. In this situation, the hamiltonian laceability of $\Cay(G;S_0)$ does not imply the required hamiltonian connectivity of $\Cay(G;S)$.

Therefore, in cases where $\Cay(G;S_0)$ is bipartite, the program also needs to verify hamiltonian connectivity for generating sets of the form $S = S_0 \cup \{g\}$, such that $\Cay(G;S)$ is not bipartite. (Such a set $S$ can be called a \emph{nonbipartite extension} of~$S_0$.) Note: we may assume that no proper subset of~$S$ generates~$G$ and gives a nonbipartite Cayley graph. (The hamiltonian connectivity of the Cayley graph of such a subset would imply the hamiltonian connectivity of $\Cay(G;S)$.) Since $\Cay(G;S_0)$ is hamiltonian laceable, we already know there are paths from~$e$ to any vertex in the other bipartition set, so only endpoints~$a$ in the bipartition set of~$e$ need to be considered.

Furthermore, if $\Cay(G;S_0)$ has valence two, then it is (usually) not hamiltonian laceable. Therefore, in this case, the program should verify that $\Cay \bigl( G;  S_0 \cup \{g\} \bigr)$ is hamiltonian connected/laceable for all $g \notin \{e\} \cup S \cup S^{-1}$ (except that we need not consider both $g$ and~$g^{-1}$).

The \GAP\ program in \filename{1-3-HamConnOrLaceable.gap} does all of this.
\end{proof}

When dealing with the case $k = 32$, our proof of \cref{kp} also applies the following known result:

\begin{lem}[\cite{Witte-PrimePower}] \label{ham64}
Every connected Cayley graph of order~$64$ is hamiltonian.
\end{lem}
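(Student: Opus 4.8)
The plan is to recognize this as a special case of a classical theorem rather than to prove it from scratch. Since $64 = 2^6$ is a prime power, every group of order~$64$ is a $2$-group, so the assertion is an immediate consequence of the result of \cite{Witte-PrimePower} that every connected Cayley graph on a group of prime-power order is hamiltonian. Thus the entire proof consists of checking that the hypothesis of that theorem is satisfied---namely, that $|G| = 64$ is a power of a prime---and quoting the conclusion. No new argument is required.

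If instead one wanted a self-contained argument in the spirit of the present paper, I would proceed by induction on~$|G|$ using the Factor Group Lemma. Because $G$ is a nontrivial $2$-group, its center is nontrivial and hence contains a central involution~$z$; set $Z = \langle z \rangle \iso \ZZ_2$, which is a normal (indeed central) subgroup. Passing to $\quot G = G/Z$ of order~$32$---whose connected Cayley graphs are already known to be hamiltonian by the results established for $k < 32$---one obtains a hamiltonian cycle~$H$ in $\Cay(\quot G; \quot S)$. Since $\ZZ_2$ has prime order, the voltage of~$H$ generates~$Z$ as soon as it is nonzero, and in that case the Factor Group Lemma lifts~$H$ to a hamiltonian cycle in $\Cay(G;S)$.

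The main obstacle---and the reason the full prime-power theorem is genuinely harder than this outline suggests---is the exceptional situation in which \emph{every} hamiltonian cycle of the quotient has trivial voltage, so that the Factor Group Lemma yields a union of disjoint cycles rather than a single hamiltonian cycle. Handling these degenerate cases requires a more careful structural analysis of the $2$-group---for instance, arranging for~$Z$ to lie inside the commutator subgroup, or enlarging or modifying the generating set---and this is exactly the content of \cite{Witte-PrimePower}. For the present purposes none of this needs to be reproved: it suffices to invoke the cited theorem. A purely computational alternative---running \LKH\ on every irredundant Cayley graph of each group of order~$64$---is available in principle, but appealing to the known result is far more economical.
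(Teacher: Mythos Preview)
Your proposal is correct and matches the paper's own justification: both observe that $64 = 2^6$ is a prime power and invoke the theorem of \cite{Witte-PrimePower} (recorded here as \fullcref{KutnarEtal}{primepower}), and both note the computational alternative of running \LKH\ on all Cayley graphs of order~$64$. The paper differs only in emphasis, preferring to actually carry out the \LKH\ verification so as to keep the argument self-contained, whereas you (reasonably) regard citing the known result as more economical.
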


\begin{justification}
This is a special case of the fact that all Cayley graphs of prime-power order are hamiltonian \fullcsee{KutnarEtal}{primepower}. However, to avoid relying on the literature, one can use the function call
	\function{IsAllHamiltonianOfTheseOrders([64])}
to verify this via a few days of computation. (There are over 14,000 Cayley graphs to consider --- most of the 267 groups of order~$64$ have many irredundant generating sets.)
\end{justification}

\begin{proof}[Proof of \cref{ham<144}]
Assume $|G| < 144$. It is known that every connected Cayley graph on any nontrivial $2$-group is hamiltonian \fullcsee{KutnarEtal}{primepower}, so we may assume that $|G|$ is divisible by some prime $p \ge 3$. Then $|G| = kp$, where $k = |G|/p < 144/3 = 48$, so \cref{kp} applies.

It might be possible to avoid appealing to \fullcref{KutnarEtal}{primepower}, by using \function{LKH} to find hamiltonian cycles in all of the Cayley graphs of order 128, but this would be a massive computation, and we did not carry it out.
\end{proof}

\subsection{\texorpdfstring{Cases where the Sylow $p$-subgroup is not $\ZZ_p$ or is not normal}%
	{Cases where the Sylow p-subgroup is not Zp or is not normal}} \label{anomalousSect}

In all later sections of this paper, we will assume that the Sylow $p$-subgroup of~$G$ is isomorphic to~$\ZZ_p$, and is normal in~$G$. The following \lcnamecref{anomalous} deals with the finitely many groups that do not satisfy this hypothesis.
(See \fullcref{CanAssume}{p>k} for a justification of the assumption that $p$~is the largest prime divisor of~$kp$.)

\begin{prop} \label{anomalous}
Let $P$ be a Sylow $p$-subgroup of~$G$, and assume $|G| = kp$, where $p$~is the largest prime divisor of~$kp$, and $k < 48$.
	\begin{enumerate}
	
	\item \label{anomalous-notZp}
	If $P \not\iso \ZZ_p$, then every connected Cayley graph on~$G$ is hamiltonian.
	
	\item \label{anomalous-notnormal}
	If $P \iso \ZZ_p$ and $P \notnormal G$ , then every connected Cayley graph on~$G$ is hamiltonian.
	\end{enumerate}
\end{prop}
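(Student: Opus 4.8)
The plan is to show that each part involves only finitely many groups, small enough to be dispatched either by the theorem that every Cayley graph of prime-power order is hamiltonian \fullcref{KutnarEtal}{primepower} or by a direct \GAP/\LKH\ computation.

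First I would bound the prime~$p$. In part~\pref{anomalous-notZp}, the hypothesis $P \not\iso \ZZ_p$ forces $|P| \ge p^2$, so $p^2 \mid kp$ and hence $p \mid k$; as $k$ is then a positive multiple of~$p$ with $k < 48$, we get $p \le k < 48$. In part~\pref{anomalous-notnormal}, the number~$n_p$ of Sylow $p$-subgroups satisfies $n_p \equiv 1 \pmod p$, $n_p \mid k$, and $n_p > 1$ (since $P$ is not normal), so $k \ge n_p \ge p+1$ and again $p < 48$. In both parts there are only finitely many admissible pairs $(k,p)$, so finitely many orders $|G| = kp$, each at most $47^2 = 2209$.

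Next I would split on whether $|G|$ is a prime power. A prime-power order can occur only in part~\pref{anomalous-notZp} (in part~\pref{anomalous-notnormal} we have $p \nmid k$, so $|G|$ has at least two distinct prime divisors), and there \fullcref{KutnarEtal}{primepower} gives the conclusion at once; this is essential for the orders lying beyond the range of the Small Groups library, the largest being $47^2 = 2209$. If $|G|$ is not a prime power, a short inspection of the admissible $(k,p)$ shows the order is at most $2 \cdot 23^2 = 1058$ in part~\pref{anomalous-notZp} and at most $43 \cdot 44 = 1892$ in part~\pref{anomalous-notnormal}; in particular $|G| < 2000$ and $|G| \neq 1024$, so $G$ lies in the Small Groups library. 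For these I would have \GAP\ list the groups of the relevant orders, retain only those whose Sylow $p$-subgroup has the stated anomalous property, and use \function{IsAllHamiltonianOfTheseOrders} (or an analogous \function{LKH} loop) to certify that every irredundant Cayley graph on each of them is hamiltonian.

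The main obstacle is this final computation: the non-prime-power orders reach into the thousand-vertex range and the groups may have many irredundant generating sets, so the exhaustive \function{LKH} search is by far the heaviest part. The theoretical content is otherwise light—the one point that must be checked with care is that the reduction genuinely confines every non-prime-power case below $2000$ and away from the excluded order~$1024$, so that nothing but a prime power ever escapes the library; granting that, correctness reduces to the prime-power theorem together with the validated output of \function{LKH}.
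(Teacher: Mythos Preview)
Your proposal is correct and follows essentially the same approach as the paper: bound $p$ via elementary Sylow arguments, reduce to a finite list of groups of bounded order, and verify hamiltonicity by an \LKH\ search over irredundant generating sets. The paper differs only in minor bookkeeping---rather than invoking \fullcref{KutnarEtal}{primepower} for prime-power orders, it restricts its loop to nonabelian groups (so orders $p^2$ drop out automatically) and treats the remaining small prime-power orders directly by computation, and it records a practical workaround (via \fullcref{KutnarEtal}{kp2}) for one group of order $1058$ whose generating-set enumeration exhausted memory.
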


\begin{justification}
\pref{anomalous-notZp}
Since the Sylow $p$-subgroup of~$G$ is not isomorphic to~$\ZZ_p$, we know that $p^2$ is a divisor of $|G| = kp$, so $p \mid k$. In fact, $p$~must be the largest prime divisor of~$k$ (since it is the largest prime divisor of~$kp$). So $p$ is uniquely determined by~$k$. 

It is a simple matter to write a \GAP\ program that
	\begin{itemize}
	\item loops through the values of~$k$ in $\{1,\ldots,47\}$, 
	\item loops through all the nonabelian groups~$G$ of order~$kp$, where $p$~is the largest prime divisor of~$k$, 
	\item loops through all the irredundant generating sets~$S$ of~$G$ (up to automorphisms of~$G$),
	and
	\item uses \function{LKH} to verify that $\Cay(G;S)$ is hamiltonian.
	\end{itemize}
(See the file \filename{2-7(1)-SylowSubgroupNotZp.gap}.) The calculations take several hours to complete. About half of the time is spent finding hamiltonian cycles in the Cayley graphs of order $32 \times 2 = 64$, since there are so many of them, so we separated out that part of the calculation \csee{ham64}. 

One important modification to the algorithm deals with the problem that the original version of the program ran out of memory when trying to find the generating sets of \function{SmallGroup(1058,4)}. (This group arises for $k = 23$.) Since $1058 = 2 \times 23^2$ is of the form $2p^2$, \fullcref{KutnarEtal}{kp2} tells us that every Cayley graph on this group is hamiltonian. (In fact, this group is of ``dihedral type'' so it is very easy either to find all of the irredundant generating sets by hand, or to prove that every connected Cayley graph is hamiltonian.) Therefore, the program skips this group (and prints the comment that it ``\verb|is dihedral type of order 2p^2|\rlap.'')

\medbreak

\pref{anomalous-notnormal} 
Let $d$ be the number of Sylow $p$-subgroups of~$G$. We know from Sylow's Theorem that $d$ is a divisor of~$k$, and that $d \equiv 1 \pmod{p}$. This implies $d < k$ (indeed, $p < d$ since $d \equiv 1 \pmod{p}$, and $d \le k$, since $d$ is a divisor of~$k$). Therefore, for each~$k$, there are only finitely many possibilities for~$p$. Also note that $d > 1$, since the Sylow subgroup~$\ZZ_p$ is not normal, and therefore has conjugates.

It is a simple matter to write a \GAP\ program that
	\begin{itemize}
	\item loops through the values of~$k$ in $\{1,\ldots,47\}$, 
	\item loops through the primes $p$ that are:
		\begin{itemize}
		\item greater than the largest prime divisor of~$k$, 
		\item less than or equal to~$k$, 
		and 
		\item such that there is a divisor~$d$ of~$k$, with $d > 1$ and $d \equiv 1 \pmod{p}$,
		\end{itemize}
	\item loops through all the groups~$G$ of order~$kp$, such that a Sylow $p$-subgroup is not normal,
	\item loops through all the irredundant generating sets~$S$ of~$G$ (up to automorphisms of~$G$),
	and
	\item uses \function{LKH} to verify that $\Cay(G;S)$ is hamiltonian.
	\end{itemize}
(See the file \filename{2-7(2)-SylowSubgroupNotNormal.gap}.) 
\end{justification}

\subsection{Notation and assumptions} \label{AssumpSect}

\begin{notation} \label{GNotation} 
In the remainder of this paper:
\noprelistbreak
	\begin{enumerate}
	\item $G$ is always a group of order~$k p$, where $1 \le k < 48$, and $p$~is a prime number.
	\item $S$ is a generating set of~$G$.
	\item $\quot {\phantom{x}} \colon G \to G/\ZZ_p$ is the natural homomorphism, if it is the case that $\ZZ_p$ is the unique Sylow $p$-subgroup of~$G$. 
	\end{enumerate}
\end{notation}

\begin{convention}
To avoid treating $k = 2$ as a special case, we will consider the graph $K_2$ to be hamiltonian, because it has a closed walk that visits all the vertices exactly once before returning to the starting point.
\end{convention}

\begin{notation}
For $s_1,\ldots,s_n \in S \cup S^{-1}$, we use $(s_1,\ldots,s_n)$ to denote the walk in $\Cay(G;S)$ that visits (in order), the vertices
	$$ e, \, s_1, \ s_1 s_2, \, s_1 s_2 s_3, \ \ldots, \, s_1s_2\cdots s_n .$$
\end{notation}

\begin{defn}[cf.\ {\cite[\S2.1.3, p.~61]{GrossTucker-TopGraphThy}}]
For any hamiltonian cycle $H = (\quot{s_1},\quot{s_2},\ldots,\quot{s_n})$ in the Cayley graph $\Cay(\quot G; \quot S)$, we let $\volt_{G,S}(H) = \prod_{i=1}^n s_i$ be the \emph{voltage} of~$H$.
This is an element of~$\ZZ_p$.
\end{defn}

We wish to show that $\Cay(G;S)$ has a hamiltonian cycle. 
Our main tool is the following elementary observation:

\begin{FGL}[``Factor Group Lemma'' {\cite[\S2.2]{WitteGallian-survey}}] \label{FGL}
Suppose
 \begin{itemize}
 \item $H = (\quot{s_1}, \quot{s_2}, \ldots, \quot{s_k})$ is a hamiltonian cycle in $\Cay(\quot G; \quot S)$,
 and
 \item $\volt_{G,S}(H)$ generates~$\ZZ_p$.
 \end{itemize}
 Then $(s_1,s_2,\ldots,s_m)^p$ is a hamiltonian cycle in $\Cay(G;S)$.
 \end{FGL}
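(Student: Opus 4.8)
The plan is to show directly that the closed walk $(s_1, s_2, \ldots, s_k)^p$ in $\Cay(G;S)$ is in fact a hamiltonian cycle, by tracking where each lift of the base cycle $H$ lands. First I would set $v = \volt_{G,S}(H) = \prod_{i=1}^k s_i \in \ZZ_p$, and observe that tracing the walk $(s_1, \ldots, s_k)$ once in $\Cay(G;S)$, starting from the identity~$e$, visits the vertices $e, s_1, s_1 s_2, \ldots, s_1 s_2 \cdots s_{k-1}$, and then the $k$-th step lands on $s_1 s_2 \cdots s_k = v$. The key point is that, because $H$ is a hamiltonian cycle in $\Cay(\quot G; \quot S)$, the partial products $\prod_{i=1}^{j} \quot{s_i}$ for $j = 0, 1, \ldots, k-1$ run through all $k$ elements of $\quot G = G/\ZZ_p$ exactly once. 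Hence the $k$ vertices visited on this first pass form a complete set of coset representatives for~$\ZZ_p$ in~$G$.

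Next I would iterate. After the first pass the walk has moved from $e$ to~$v$, so the second pass is the translate of the first by~$v$ (on the appropriate side), and more generally the $(t+1)$-st pass, for $0 \le t < p$, starts at $v^t$ and visits the vertices $v^t \cdot (s_1 s_2 \cdots s_j)$ for $j = 0, \ldots, k-1$. Since each pass covers every coset of~$\ZZ_p$ exactly once, and the vertices within a fixed coset $g \ZZ_p$ that get visited across the $p$ passes are $g, gv, gv^2, \ldots, gv^{p-1}$, I would invoke the hypothesis that $v$ \emph{generates}~$\ZZ_p$: this guarantees that $\{v^t : 0 \le t < p\}$ is all of~$\ZZ_p$, so within each coset all $p$ vertices are hit, each exactly once. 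Counting, the walk visits $p \cdot k = |G|$ vertices, all distinct, so it passes through every vertex of~$\Cay(G;S)$ exactly once.

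Finally I would check closure: after $p$ full passes the walk is at $v^p = e$ (since $v \in \ZZ_p$ has order dividing~$p$, and in fact the endpoint after $p$ passes is $v^p$, which is the identity), so the walk returns to its starting vertex, confirming it is a cycle rather than merely a path. Each consecutive pair of vertices differs by left-multiplication by some $s_i \in S \cup S^{-1}$, so every step is a genuine edge of $\Cay(G;S)$; together with the vertex count this shows $(s_1, \ldots, s_k)^p$ is a hamiltonian cycle.

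The main obstacle, and the one place where the hypotheses are genuinely used, is the claim that the $p$ vertices in a fixed coset are exactly the distinct powers $g, gv, \ldots, gv^{p-1}$; this is precisely where ``$\volt_{G,S}(H)$ generates~$\ZZ_p$'' is indispensable, since if $v$ generated only a proper subgroup the walk would revisit vertices and close up early, covering only part of each coset. A minor bookkeeping subtlety is keeping the sidedness of the multiplication consistent (the walk notation uses the convention that the step $s_i$ sends a vertex~$g$ to~$s_i g$, matching \cref{GSNotation}), and verifying that translating the base cycle by the central-like element~$v$ does indeed map the first pass onto the successive passes; both are routine once the coset-representative structure is in hand.
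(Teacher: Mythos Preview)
The paper does not actually prove the Factor Group Lemma; it merely states it with a citation to \cite[\S2.2]{WitteGallian-survey}. So there is no in-paper proof to compare against. Your argument is the standard one and is essentially correct: the $k$ partial products on the first pass give a transversal for~$\ZZ_p$ in~$G$, the $t$-th pass is the left-translate of the first by~$v^t$, and the hypothesis that $v$ generates~$\ZZ_p$ forces the $p$ translates within each coset to be distinct, so all $kp = |G|$ vertices are visited once and the walk closes since $v^p = e$.

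One small bookkeeping slip: you write that ``the step $s_i$ sends a vertex~$g$ to~$s_i g$,'' but under the paper's walk convention the vertices are $e, s_1, s_1 s_2, \ldots$, so the step is right multiplication, $g \mapsto g s_i$. Consequently the $(t+1)$-st pass visits $v^t g_j$ (as you correctly wrote), and the vertices hit in the coset $\ZZ_p g_j$ are $g_j, v g_j, \ldots, v^{p-1} g_j$ rather than $g_j, g_j v, \ldots$. Since $\ZZ_p \normal G$, the coset $\ZZ_p g_j$ equals $g_j \ZZ_p$ and distinctness is unaffected, so this does not damage the argument; it is worth cleaning up for consistency. Also, $v$ need not be ``central-like'' in any sense beyond lying in the normal subgroup~$\ZZ_p$, and that normality is all you use.
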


 \begin{lem} \label{CanAssume}
 To prove \cref{kp}, we may assume:
 	\begin{enumerate}
	
	\item \label{CanAssume-nonabel}
	$G$ is not abelian.
	
	\item \label{CanAssume-k>1}
	$k > 1$.
	
	\item \label{CanAssume-k'}
	If $G'$ is any group of order $k' p'$, where $1 \le k' < k$, and $p'$ is any prime number, then every connected Cayley graph on~$G'$ is hamiltonian.
	
	\item \label{CanAssume-p>k}
	$p$ is strictly greater than the largest prime factor of~$k$.
	
	\item \label{CanAssume-Zp}
	$\ZZ_p$ is a Sylow $p$-subgroup of~$G$, and $\ZZ_p \normal G$. 
	
	
	\item \label{CanAssume-sNotNormal}
 	There does not exist $s \in S$, such that $\langle s \rangle
\normaleq G$, and such that either
         \begin{enumerate}
         \item \label{CanAssume-sNotNormal-Z} 
         $s \in Z(G)$,
         or
         \item \label{CanAssume-sNotNormal-notZ} 
         $Z(G) \cap \langle s \rangle = \{e\}$,
         or
         \item \label{CanAssume-sNotNormal-p}
         $|s|$ is prime.
         \end{enumerate}
         
	\item \label{CanAssume-notZp}
	$S \cap \ZZ_p = \emptyset$.
	
	\item \label{CanAssume-DoubleEdge}
	$\quot s \neq \quot t$, for all $s,t \in S \cup S^{-1}$ with $s \neq t$.
	
	\item \label{CanAssume-DoubleOrder2}
	If $s \in S$ with $|\quot s| = 2$, then $|s| = 2$.
	
	\item \label{CanAssume-semiprod}
	$G = \ZZ_p \rtimes_\tau \quot G$, where $\tau$~is a homomorphism from~$\quot G$ to~$ \ZZ_p^\times$.

	\end{enumerate}
\end{lem}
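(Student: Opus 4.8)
The plan is to prove \cref{kp} by strong induction on~$k$; the induction hypothesis is exactly the assertion that \cref{kp} already holds for every order~$k'p'$ with $k'<k$, so I may use it freely. Each of the other assertions is then justified by showing that, whenever the stated condition fails, a hamiltonian cycle in $\Cay(G;S)$ can already be exhibited, so there is no harm in assuming the condition holds. The abelian-type reductions are immediate: if $G$ is abelian then $\Cay(G;S)$ is hamiltonian \cite{ChenQuimpo}, and if $k=1$ then $G\iso\ZZ_p$ is cyclic; so henceforth I assume $G$ is nonabelian and $k>1$.

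First I would fix the prime. If $p$ is not the largest prime divisor of $|G|$, let~$q$ be that divisor; then $q\mid k$ and $|G|=k'q$ with $k'=|G|/q<|G|/p=k$, so the induction hypothesis already gives that $\Cay(G;S)$ is hamiltonian. Hence I may assume $p$ is the largest prime divisor of $|G|$, which is the hypothesis needed to invoke \cref{anomalous}: part~\pref{anomalous-notZp} settles the case $P\not\iso\ZZ_p$ (equivalently $p\mid k$), so afterwards $\ZZ_p$ is a Sylow subgroup and every prime factor of~$k$ is strictly below~$p$, and part~\pref{anomalous-notnormal} settles the case $\ZZ_p\notnormal G$. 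This establishes both that $p$ exceeds the largest prime factor of~$k$ and that $\ZZ_p\normal G$. Because now $\gcd(p,k)=1$, the Schur--Zassenhaus theorem furnishes a complement, giving $G=\ZZ_p\rtimes_\tau\quot G$ with $\tau\colon\quot G\to\ZZ_p^\times$.

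The remaining assertions concern the generating set and use the quotient $\quot G=G/\ZZ_p$, which has order $k<48$ and so falls under \cref{hamconn<64}. If there were $s\in S$ with $\langle s\rangle\normaleq G$ of one of the three special types, I would pass to $G/\langle s\rangle$: its order $kp/|s|$ equals $(k/|s|)p$ with $k/|s|<k$ when $p\nmid|s|$, and is less than~$48$ when $p\mid|s|$, so it is hamiltonian by the induction hypothesis in the first case and by the finite computations for orders less than~$48$ in the second. A lifting lemma then produces a hamiltonian cycle of $\Cay(G;S)$---the central-subgroup lift when $s\in Z(G)$, and a generating-voltage argument based on \cref{FGL} when $Z(G)\cap\langle s\rangle=\{e\}$ or $|s|$ is prime. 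The assertion $S\cap\ZZ_p=\emptyset$ is the special case in which $\langle s\rangle=\ZZ_p$, since any $s\in S\cap\ZZ_p$ has prime order~$p$ and generates the normal subgroup~$\ZZ_p$. The two remaining ``coincidence'' assertions succumb to one voltage trick: if $\quot s=\quot t$ for distinct $s,t\in S\cup S^{-1}$, or if $|\quot s|=2$ but $|s|\ne2$, then the two admissible lifts of a single quotient-edge ($s$ versus~$t$, respectively $s$ versus~$s^{-1}$) differ by a nonidentity element of~$\ZZ_p$, hence by a generator. Routing a hamiltonian cycle of $\Cay(\quot G;\quot S)$ through that edge---using the hamiltonian connectivity or laceability from \cref{hamconn<64}---and choosing the lift that makes the voltage nonzero, I then apply \cref{FGL}.

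The hard part will be the reduction that forbids a normal $\langle s\rangle$ of the three special types: it needs a correct lifting lemma for each of the conditions, together with the case-by-case verification that $G/\langle s\rangle$ really is a smaller instance to which the induction hypothesis or the finite computations apply. The voltage adjustments for the two coincidence assertions are the other delicate point, because they rely on the full strength of \cref{hamconn<64}---hamiltonian connectivity/laceability, not mere hamiltonicity---to guarantee a quotient cycle through a prescribed edge. By comparison, the arithmetic reductions are routine once \cref{anomalous} and Schur--Zassenhaus are available.
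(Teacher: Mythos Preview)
Your proposal is correct and follows essentially the same route as the paper: strong induction on~$k$, the abelian case as an easy exercise, a largest-prime-divisor swap to reduce to $p$ maximal, \cref{anomalous} and Schur--Zassenhaus for the structural assumptions, a cited lifting lemma for the normal-$\langle s\rangle$ reduction, and \cref{hamconn<64} plus \cref{FGL} for the two coincidence assertions. One small cleanup: when $p\mid |s|$ in part~\pref{CanAssume-sNotNormal} you appeal to ``finite computations for orders less than~$48$,'' but the induction hypothesis~\pref{CanAssume-k'} already handles this case directly (write $|G/\langle s\rangle|=m\le k$ as $(m/q)\cdot q$ for any prime $q\mid m$, so $k'=m/q<k$), which is exactly how the paper argues it and avoids any appearance of circularity.
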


\begin{proof}
\pref{CanAssume-nonabel}
Showing that all connected Cayley graphs on abelian groups are hamiltonian is an easy exercise. (The Chen-Quimpo Theorem \pref{ChenQuimpo} is a much stronger result.).

\pref{CanAssume-k>1}
If $k = 1$, then $|G| = p$, so $G$ is abelian, contrary to~\pref{CanAssume-nonabel}.

\pref{CanAssume-k'} We may assume this by induction on~$k$.

\pref{CanAssume-p>k} Let $p'$ be the largest prime factor of~$k$, and write $|G| = k' p'$. If $p = p'$, then $|G|$ is divisible by~$p^2$, so \fullcref{anomalous}{notZp} applies. If $p < p'$, then $k > k'$, so \pref{CanAssume-k'} applies.

\pref{CanAssume-Zp}
If either $P \not\iso \ZZ_p$ or $P \notnormal G$, then \cref{anomalous} applies.


\pref{CanAssume-sNotNormal}
For any $s \in S$, we know, from \pref{CanAssume-nonabel}, that $\langle s \rangle \neq G$.
We see from \pref{CanAssume-k'} that $\Cay \bigl( G/\langle s \rangle; S \bigr)$ is hamiltonian. Therefore, it is well known (and easy to prove) that if $s$ satisfies any of the given conditions, then $\Cay(G;S)$ is hamiltonian \cite[Lem.~2.27]{M2Slovenian-LowOrder}.

\pref{CanAssume-notZp}
This is a special case of~\pref{CanAssume-sNotNormal-p}.

\pref{CanAssume-DoubleEdge}
From \pref{hamconn<64}, we see that every edge of $\Cay(\quot G; \quot S)$ is in a hamiltonian cycle. Therefore, if $\quot s = \quot t$ with $s \neq t$, then the existence of a hamiltonian cycle in $\Cay(G;S)$ is a well-known (and easy) consequence of \cref{FGL} (cf.~\cite[Cor.~2.11]{M2Slovenian-LowOrder}).

\pref{CanAssume-DoubleOrder2}
Since $\quot s = \quot s^{-1}$, this follows from \pref{CanAssume-DoubleEdge} with $t = s^{-1}$.

\pref{CanAssume-semiprod}
From \pref{CanAssume-p>k}, we know that $\gcd \bigl( |\quot G|, k \bigr) = 1$. Therefore, the desired conclusion is a consequence of the Schur-Zassenhaus Theorem \cite{Wikipedia-SchurZassenhaus}.
\end{proof}

\begin{rem}
It is immediate from \pref{CanAssume-notZp} and \pref{CanAssume-DoubleEdge} of \cref{CanAssume} that the Cayley graphs $\Cay(G;S)$ and $\Cay(\quot G; \quot S)$ have the same valence (and have no loops).
\end{rem}

\section{Irredundant generating sets of the quotient} \label{IrredundantSect}

In this \lcnamecref{IrredundantSect}, we assume that the generating set~$\quot S$ of~$\quot G$ is irredundant.
The assumptions stated in \cref{GSNotation,GNotation,CanAssume} are also assumed to hold. 

In most cases, we will find a hamiltonian cycle in $\Cay(\quot G; \quot S)$ with nonzero voltage, so that \cref{FGL} applies. The following \lcnamecref{IrredSpecialCases} deals with the exceptional cases in which this approach does not work.

\begin{lem} \label{IrredSpecialCases}
Assume the generating set $\quot S$ of~$\quot G$ is irredundant.
Then $\Cay(G;S)$ has a hamiltonian cycle in each of the following situations:
\smallskip
	\begin{enumerate}

	\item \label{IrredSpecialCases-23} 
	$\quot S = \{\quot a, \quot b\}$, with\/ $|\quot a| = 2$, $|\quot b| = 3$, and $\tau(\quot b) = 1$.
	
 	\item \label{IrredSpecialCases-A4xZp}
	$\quot G \iso A_4$,  $\quot S= \{\quot a, \quot b\}$, where $|\quot a| = |\quot b| = 3$, and $\quot G$~centralizes~$\ZZ_p$. 

	\item \label{IrredSpecialCases-GeneralizeA4}
	$\quot G = (\ZZ_2 \times \ZZ_2) \rtimes_\tau \ZZ_m$, $\quot S= \{\quot a, \quot b\}$, where $\quot a \in \ZZ_2 \times \ZZ_2$, $|\quot b| = m$, $\quot G$ is not abelian, and $\quot G$~centralizes~$\ZZ_p$. 

	\item \label{IrredSpecialCases-CentralInS}
	$\quot S$ contains an element~$a$, such that $a \in Z(\quot G)$, $|a| = 2$, and $\tau(a) = 1$.

	\item \label{IrredSpecialCases-a2Normal}
	 $\quot S$ contains an element~$a$, such that $a^2$ has prime order, $\langle a^2\rangle \normal \quot G$, and $\tau(a) = -1$.
	 
	 \item \label{IrredSpecialCases-GisDihedral}
	$\quot G$ is a dihedral group of order~$k$ \textup(with $k > 4$\textup), $\quot S = \{\quot a, \quot b\}$ with $|\quot a| = 2$ and $\quot b = k/2$, $\quot a$ inverts~$\ZZ_p$ and $\quot b$ centralizes~$\ZZ_p$.
	
	\item \label{IrredSpecialCases-ZqxZ4}
	$|\quot G| = 4q$ and $|[\quot G, \quot G]| = q$, where $q$ is prime, and $\quot S = \{\quot a, \quot b\}$, where $|\quot a| = 4$ and $|\quot b| = 2$. Furthermore, $\quot G$~centralizes~$\ZZ_p$, but $\quot b$ does not centralize~$[\quot G, \quot G]$.
	
	\end{enumerate}
\end{lem}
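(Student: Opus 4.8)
The plan is to dispose of the seven configurations one at a time, since each is a genuinely different obstruction to the naive strategy. Their common feature is that the heuristic search produced no hamiltonian cycle of $\Cay(\quot G;\quot S)$ with nonzero voltage; the diagnosis in each case is that the \emph{universal} voltage of every hamiltonian cycle assembled from a lift of $\quot S$ vanishes identically, so \cref{FGL} cannot be applied to $\quot S$ as it stands. For each case I would therefore aim to do one of three things: (i)~exhibit a single explicit hamiltonian cycle of $\Cay(\quot G;\quot S)$ whose voltage is visibly a generator of~$\ZZ_p$; (ii)~reduce to a smaller group by applying \cref{FGL} (together with \fullcref{CanAssume}{k'}) to a proper normal \emph{cyclic} subgroup of~$G$ determined by one of the generators; or (iii)~build a hamiltonian cycle in $\Cay(G;S)$ directly from the explicit structure of~$G$.

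Two mechanisms should cover most of the list. The first is reduction through a normal cyclic subgroup. In the situations of~\pref{IrredSpecialCases-CentralInS} and~\pref{IrredSpecialCases-a2Normal}, the hypotheses single out a generator~$a$ whose lift~$s$ (respectively~$s^2$) satisfies $\tau=1$ and generates a subgroup that is normal in~$G$ and lies in the standard copy of~$\quot G$; here I would take a hamiltonian cycle of the quotient by that subgroup, which exists by \fullcref{CanAssume}{k'}, and check that its voltage generates the (prime-order) cyclic subgroup, so that \cref{FGL} lifts it to~$\Cay(G;S)$. The dihedral configurations~\pref{IrredSpecialCases-23} and~\pref{IrredSpecialCases-GisDihedral} should yield to the same idea applied to $\langle\quot b\rangle$, while tracking the sign changes produced by the inverting generator~$\quot a$ (where $\tau(\quot a)=-1$). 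The second mechanism is a counting argument for the cases in which $\quot G$ centralizes~$\ZZ_p$, so that $G=\ZZ_p\times\quot G$ and the voltage of a cycle~$H$ collapses to the net signed count $\sum_{\quot s}c_{\quot s}\,z_{\quot s}$, where $z_{\quot s}\in\ZZ_p$ is the $\ZZ_p$-component of the generator and the integers $c_{\quot s}$ obey the homology relation $\sum_{\quot s}c_{\quot s}[\quot s]=0$ in $\quot G^{\mathrm{ab}}$. Because $S$ generates~$G$, the vector $(z_{\quot s})$ is nonzero, so it suffices to produce hamiltonian cycles whose count-vectors $(c_{\quot s})$ are not all orthogonal to $(z_{\quot s})$ modulo~$p$; for~\pref{IrredSpecialCases-A4xZp}, with $\quot G^{\mathrm{ab}}\iso\ZZ_3$, this amounts to realizing the count-vectors $(3,0)$ and $(0,3)$, which is enough since \fullcref{CanAssume}{p>k} forces $p>3$.

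Case~\pref{IrredSpecialCases-GeneralizeA4} should follow the same template with the general $\quot G^{\mathrm{ab}}$ in place of~$\ZZ_3$, the only extra point being to confirm that enough count-vectors remain realizable as~$m$ varies. The hard part, I expect, is~\pref{IrredSpecialCases-ZqxZ4}: here $|\quot G|=4q$ with $|[\quot G,\quot G]|=q$ prime and $\quot G$ centralizes~$\ZZ_p$, yet $\quot b$ fails to centralize the commutator subgroup, so the clean ``$G=\ZZ_p\times\quot G$'' collapse of the voltage must be combined with bookkeeping inside $[\quot G,\quot G]$; I would look for an explicit hamiltonian cycle adapted to the two generators of orders~$4$ and~$2$ and verify directly that its voltage is a unit of~$\ZZ_p$. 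The genuinely delicate issue throughout — and the reason these cases cannot be left to the computer — is that each configuration occurs for infinitely many primes~$p$, so the argument must supply a nonzero voltage for \emph{every} admissible~$p$, not merely for all but finitely many; I would clear the few residual small primes (those dividing a relevant count, or excluded above) with the verification routines already described.
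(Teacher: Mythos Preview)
Your plan has the right shape, but there is a concrete failure in case~\pref{IrredSpecialCases-23}. You label it a ``dihedral configuration'' and propose reducing modulo $\langle\quot b\rangle$; however, the hypothesis is only that $|\quot a|=2$ and $|\quot b|=3$, so $\quot G$ may be $A_4$, $S_4$, $A_5$, or any $(2,3)$-generated group of the relevant order, and in these $\langle\quot b\rangle$ is \emph{not} normal --- no reduction is available. The paper's argument is different and does not quotient at all: because the $\quot b$-edges partition $\quot G$ into triangles and each vertex has a unique $\quot a$-edge, every hamiltonian cycle in $\Cay(\quot G;\quot S)$ necessarily has the form $(\quot a,\quot b^{\pm2},\quot a,\quot b^{\pm2},\ldots)$; after arranging that $a$ projects trivially to~$\ZZ_p$ (so $|b|=3p$), one simply replaces each block $\quot b^{\pm2}$ by $b^{\pm(3p-1)}$ to obtain a hamiltonian cycle in $\Cay(G;S)$ directly.

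The phrase ``check that its voltage generates'' in your handling of \pref{IrredSpecialCases-CentralInS} and~\pref{IrredSpecialCases-a2Normal} is a second gap: an arbitrary hamiltonian cycle of the quotient need not have generating voltage, and you supply no mechanism to force it. The paper instead shows these cases are excluded by the standing assumptions: in~\pref{IrredSpecialCases-CentralInS}, \fullcref{CanAssume}{DoubleOrder2} makes the lift $s$ project trivially, so $s\in Z(G)$, contradicting \fullcref{CanAssume}{sNotNormal-Z}; in~\pref{IrredSpecialCases-a2Normal}, after conjugating so that $s$ projects trivially, $s$ and $s^{-1}$ coincide modulo the normal prime-order subgroup $\langle s^2\rangle$, and the double-edge argument of \fullcref{CanAssume}{DoubleEdge} applies with $\langle s^2\rangle$ in the role of~$\ZZ_p$. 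For \pref{IrredSpecialCases-A4xZp}, \pref{IrredSpecialCases-GeneralizeA4}, and \pref{IrredSpecialCases-GisDihedral} the paper bypasses any count-vector analysis and writes down explicit hamiltonian cycles in $\Cay(G;S)$ (the Jungreis--Friedman cycle $(b^{|b|-1},a,b^{-(|b|-1)},a)^2$ and the obvious dihedral cycle, respectively); and for~\pref{IrredSpecialCases-ZqxZ4} it applies \cref{FGL} with respect to $\ZZ_q=[\quot G,\quot G]$ rather than~$\ZZ_p$, the nonzero voltage coming from the hypothesis that $\quot b$ (hence $\quot a$) does not invert~$\ZZ_q$.
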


\begin{proof}
\pref{IrredSpecialCases-23}
We may assume $a$ projects trivially to~$\ZZ_p$. (If $a$ centralizes~$\ZZ_p$, this follows from \fullcref{CanAssume}{DoubleOrder2}. If $a$ does not centralize~$\ZZ_p$, then it is true after conjugation by some element of~$\ZZ_p$.) So $b$ must project nontrivially. Since $b$ centralizes~$\ZZ_p$, this implies $|b| = 3p$.

Since $|\quot a| = 2$ and $|\quot b| = 3$, it is easy to see that every hamiltonian cycle in $\Cay(A_4; \quot a, \quot b)$ is of the form
	$ (\quot a, {\quot b\,}^{\pm2}, \quot a, {\quot b\,}^{\pm2}, \ldots, \quot a, {\quot b\,}^{\pm2} ) $
\cite[p.~238]{Schupp-structure}. Hence, each right coset of $\langle \quot b \rangle$ appears as consecutive vertices in this cycle, so it is not difficult to see that 
	$ (  a, b^{\pm(3p-1)}, \quot a, b^{\pm(3p-1)}, \ldots, a, b^{\pm(3p-1)} ) $
passes through all of the vertices in each right coset of $\langle b \rangle$, and is therefore a hamiltonian cycle in $\Cay(G; a,b)$. See Subcase~1.1 of \cite[\S3]{Morris-nonsolvable} for a detailed verification of a very similar example.

\pref{IrredSpecialCases-A4xZp}
\cite[Subcase~2.2 of Prop.~7.2]{M2Slovenian-LowOrder}:
Assume, without loss of generality, that $a$ projects nontrivially to~$\ZZ_p$, so $|a| = 3p$. Therefore $4  |a| = |G|$.
Since $\quot G$~centralizes~$\ZZ_p$, we have $G \iso \ZZ_p \times A_4$. Therefore, $[G,G] \iso [A_4, A_4] \iso \ZZ_2 \times \ZZ_2$, so $| [a,b] | = 2$. 
It is now not difficult to verify that 
	$\bigl( a^{|a|-1}, b^{-1}, a^{-(|a|-1)}, b \bigr)^2$
is a hamiltonian cycle. (This is a special case of a lemma of D.\,Jungreis and E.\,Friedman that can be found in \cite[2.14]{M2Slovenian-LowOrder}.)

\pref{IrredSpecialCases-A4xZp}
 \fullCref{CanAssume}{DoubleOrder2} tells us that the projection of~$a$ to~$\ZZ_p$ is trivial. So the projection of $b$ to~$\ZZ_p$ is nontrivial. Since $b$ centralizes~$\ZZ_p$, this implies $|b| = mp = |G|/4$. Also note that $\quot b$ does not centralize $\quot a$ (since $\quot G$ is not abelian), so $\{e, b^{-1}a b, b^{-1}a b \, a, a\} = \ZZ_2 \times \ZZ_2$. Therefore, it is easy to see that
 	$ (b^{mp-1}, a, b^{-(mp-1)}, a)^2$
is a hamiltonian cycle in $\Cay(G;S)$. (This is an easy special case of the same lemma of D.\,Jungreis and E.\,Friedman that was used in \pref{IrredSpecialCases-GeneralizeA4}.) 

\pref{IrredSpecialCases-CentralInS} Let $s \in S$ with $\quot s = a$. \fullCref{CanAssume}{DoubleEdge} (with $t = a^{-1}$) implies $|s| = 2$.  Since $\tau(a) = 1$, we know that $s$ centralizes~$\ZZ_p$, so \fullcref{CanAssume}{DoubleOrder2} implies that $s$ has trivial projection to~$\ZZ_p$ (since $p > 2$). Therefore, we have $s \in Z(G)$, which contradicts \fullcref{CanAssume}{sNotNormal-Z}.

\pref{IrredSpecialCases-a2Normal} Let $s \in S$ with $\quot s = a$. Since $\tau(a) \neq 1$, we know that $s$ does not centralize~$\ZZ_p$, so may assume (after conjugating by an appropriate element of~$\ZZ_p$) that the projection of~$s$ to~$\ZZ_p$ is trivial. This means $s = a$. Then, since $\tau(a^2) = \bigl(\tau(a)\bigr)^2 = (-1)^2 = 1$, we see that $s^2$ generates a subgroup of prime order that is normal in~$G$. (Indeed, we know, by assumption, that $\langle a^2\rangle$ is normalized by~$\quot G$, and it centralizes~$\ZZ_p$ since $\tau(s^2) = 1$).  This contradicts the conclusion of \fullcref{CanAssume}{DoubleEdge} (with $t = s^{-1}$, and with $\langle s^2 \rangle$ in the role of~$\ZZ_p$).

\pref{IrredSpecialCases-GisDihedral} Since $\quot a$ does not centralize~$\ZZ_p$, we may assume that $a$~projects trivially to~$\ZZ_p$ (after replacing $S$ with a conjugate). Since $S$ generates~$G$, this implies that $b$~projects nontrivially to~$\ZZ_p$. Since $\quot b$ centralizes~$\ZZ_p$, we conclude that $|b| = p |\quot b| \, p$. Also, since $a = \quot a$ inverts both $\quot b$ and~$\ZZ_p$, we know that $a$ inverts~$b$. So $G$ is the dihedral group of order~$kp$, and $\{a,b\}$ is the obvious generating set consisting of a reflection~$a$ and a rotation~$b$. Therefore, if we let $m = \frac{1}{2}|G| - 1$, then we have the hamiltonian cycle $(a, b^m)^2$.

\pref{IrredSpecialCases-ZqxZ4} This is a known result. Namely, since $|G| = 4pq$, this is a special case of \cref{KutnarEtal}{kpq}.  (Alternatively, we may apply \fullcref{G'=pq}{primepower}, since $[G,G] = |[\quot G, \quot G]| = q$.) For completeness, we record a proof that is adapted from \cite[Case 5.3]{KeatingWitte}.)
We know that $|\quot G| = 4q$, $|[\quot G, \quot G]| = q$, and $|\quot a| = 4$, so we may write $\quot G = \ZZ_q \rtimes \ZZ_4$, with $\ZZ_q = [\quot G, \quot G]$ and $\ZZ_4 = \langle \quot a \rangle$.
Since $\quot b$ has order~$2$ and centralizes~$\ZZ_p$, we see from \fullcref{CanAssume}{DoubleOrder2} that $b$ projects trivially to~$\ZZ_p$, so $a$ must project nontrivially. Therefore $a$ generates $G/\ZZ_q$, so we have $b \in a^i \ZZ_q$, for some (even)~$i$ with $0 \le i < 4p$. (Also, we know $i \neq 0$, because $|\quot b| = 2$ is not a divisor of~$q$.)  Then $(b, a^{-(i-1)}, b, a^{4q-i-1})$ is a hamiltonian cycle in $\Cay(G/\ZZ_q; a,b)$. 

If we write $b = \gamma a^i$, with $\gamma \in \ZZ_q$, then the voltage of this hamiltonian cycle is
	$$ b  a^{-(i-1)}  b a^{4q-i-1} 
	= (\gamma a^i)  a^{-(i-1)}  (\gamma a^i) a^{4q-i-1} 
	= \gamma a  \gamma a^{-1}
	. $$
Since $b$ does not centralize~$\ZZ_q$ (and $b \in a^i \ZZ_q$ with $i$~even), we know that $a$ does not invert~$\ZZ_q$. Therefore the voltage $\gamma a  \gamma a^{-1}$ is nontrivial, so \cref{FGL} applies.
\end{proof}

We wish to show, for each lift of~$\quot S$ to a generating set~$S$ of~$G$, that some hamiltonian cycle in $\Cay(\quot G; \quot S)$ has nonzero voltage.

\begin{defn}[\cite{Wikipedia-norm}]
Recall that the \emph{norm} of an algebraic number is the product of all of its Galois conjugates in~$\CC$.
\end{defn}

\begin{lem}[{cf.\, \cite[Lem.~2.11]{Morris-nonsolvable}}] \label{VoltageMatrix}
Assume 
\noprelistbreak
	\begin{itemize}
	\item $G = \ZZ_p \rtimes_\tau \quot G$, where $\tau$ is a homomorphism from~$\quot G$ to~$ \ZZ_p^\times$,
	\item $\zeta = \phi \circ \tau$, where $\phi$ is an isomorphism from $\ZZ_p^\times$ onto the group~$\mu_{p-1}$ of $(p-1)$th roots of unity in~$\CC$, so $\zeta$ is an abelian character of~$\quot G$ \textup(more precisely, $\zeta$ is a homomorphism from~$\quot G$ to~$\mu_{p-1}$\textup),
	\item $Z$ is the subring of~$\CC$ that is generated by the $(p-1)$th roots of unity,
	\item $S = \{a_1,a_2,\ldots, a_m, b_1,\ldots,b_n\} \cup B_0$ is a generating set of~$G$,
		such that
		\begin{itemize}
			\item each $\quot{a_i}$ has order~$2$, and centralizes~$\ZZ_p$,
			\item either $B_0$ is empty, or $B_0$ consists of a single element~$b_0$ that does not centralize~$\ZZ_p$,
		\end{itemize}
	\item $H_i$ is a hamiltonian cycle in $\Cay(\quot G; \quot S)$, for $i = 1,2,\ldots,n$,
	and
	\item for $j = 1,2,\ldots,n$, $S_j$ is the generating set of~$G$, such that $\quot{S_j} = \quot S$, and $s \in \quot G$ for all $s \in S_j$, except that $(1, \quot{b_j}) \in S_j$.
	\end{itemize}
If\/ $\norm \bigl( \det \bigl[ \volt_{Z \rtimes_\zeta \quot G, S_j}(H_i) \bigr] \bigr)$ is not divisible by~$p$, then\/ $\Cay(G;S)$ is hamiltonian.
\end{lem}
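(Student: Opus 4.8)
The plan is to invoke \cref{FGL}: since $\ZZ_p$ has prime order, every nonzero element generates it, so it suffices to exhibit \emph{one} hamiltonian cycle among $H_1,\ldots,H_n$ whose voltage in $G=\ZZ_p\rtimes_\tau\quot G$ is nonzero. I will reduce this to the invertibility over $\ZZ_p$ of an $n\times n$ matrix built from the given voltages, and then translate the norm hypothesis into that invertibility.

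First I would normalize the lift. Writing elements of $G$ as pairs $(z,\quot g)$ with $z\in\ZZ_p$, the hypothesis that each $\quot{a_i}$ has order~$2$ and centralizes~$\ZZ_p$ forces the lift to have trivial $\ZZ_p$-component: \fullcref{CanAssume}{DoubleOrder2} gives $|a_i|=2$, while a direct computation with $a_i=(z_i,\quot{a_i})$ and $\tau(\quot{a_i})=1$ yields $a_i^2=(2z_i,e)$, so $z_i=0$ because $p$ is odd. Since $b_0$ does not centralize~$\ZZ_p$, conjugating $S$ by a suitable element of~$\ZZ_p$ makes the component of~$b_0$ trivial as well; as the $\quot{b_j}$ centralize~$\ZZ_p$, this conjugation leaves their components $w_1,\ldots,w_n\in\ZZ_p$ unchanged. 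If all $w_j$ vanished then every element of~$S$ would lie in the complement $\{0\}\times\quot G$, contradicting $\langle S\rangle=G$; hence $w=(w_1,\ldots,w_n)\neq 0$.

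Next I would record the relevant linearity. Expanding the product that defines the voltage in a semidirect product shows that $\volt_{G,S}(H)$ is a $\ZZ_p$-linear form in the $\ZZ_p$-components of the generators, with coefficients given by the values of~$\tau$ on the partial products along~$H$; replacing $\tau$ by $\zeta$ gives the corresponding $Z$-linear form computing $\volt_{Z\rtimes_\zeta\quot G,S}(H)$. Because only the $b_j$ carry nontrivial components, this gives $\volt_{G,S}(H_i)=\sum_{j}\volt_{G,S_j}(H_i)\,w_j$, i.e.\ the vector of voltages equals $Aw$ where $A=\bigl[\volt_{G,S_j}(H_i)\bigr]$ is the $n\times n$ matrix over~$\ZZ_p$ obtained by evaluating at the standard lifts~$S_j$ (in which only $b_j$ has component~$1$).

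The crux, and the step I expect to be the main obstacle, is to pass from the norm hypothesis over~$Z$ to nonvanishing of~$\det A$ over~$\ZZ_p$. Here $Z=\ZZ[\mu_{p-1}]$ is the ring of integers of $\QQ(\zeta_{p-1})$, and $p$ splits completely in it (since $p\equiv 1\pmod{p-1}$, equivalently $\mathbb{F}_p$ already contains all $(p-1)$th roots of unity), so each prime $\mathfrak p\mid p$ gives a reduction $Z\to Z/\mathfrak p\iso\ZZ_p$. The Galois group acts simply transitively on these primes and as the full automorphism group of~$\mu_{p-1}$, so I can choose $\mathfrak p$ whose reduction $\rho$ satisfies $\rho\circ\zeta=\tau$ (that is, $\rho$ inverts~$\phi$ on roots of unity). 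Reducing the voltage formula entry by entry then gives $A=\rho(M)$ with $M=\bigl[\volt_{Z\rtimes_\zeta\quot G,S_j}(H_i)\bigr]$, whence $\det A=\rho(\det M)$. Now $p\nmid\norm(\det M)$ forces $\det M\notin\mathfrak q$ for \emph{every} prime $\mathfrak q$ above~$p$, in particular for the chosen~$\mathfrak p$, so $\det A=\rho(\det M)\neq 0$. Thus $A$ is invertible over~$\ZZ_p$, and since $w\neq 0$ we obtain $Aw\neq 0$, so some $\volt_{G,S}(H_i)$ is nonzero and \cref{FGL} completes the proof. The only genuine subtlety beyond bookkeeping is arranging the reduction to be compatible with~$\phi$ together with the standard fact that $p\mid\norm(\alpha)$ exactly when $\alpha$ lies in some prime above~$p$; using the norm rather than a single residue is precisely what makes the criterion independent of the choices of~$\phi$ and~$\mathfrak p$.
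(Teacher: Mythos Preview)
Your argument is correct and follows essentially the same route as the paper's proof: normalize the lifts of the $a_i$ and~$b_0$, express the voltage linearly in the $\ZZ_p$-components of the~$b_j$, reduce to nonsingularity of the matrix over~$\ZZ_p$, and pass through a ring homomorphism $Z\to\ZZ_p$ compatible with $\phi^{-1}$ to deduce this from the norm hypothesis. One small slip: nothing in the hypotheses says that the $\quot{b_j}$ centralize~$\ZZ_p$, so your claim that conjugation leaves the $w_j$ unchanged is unjustified---but this is harmless, since you only need that \emph{after} conjugation the $a_i$ and $b_0$ have trivial component and the resulting (possibly new) components $w_j$ are not all zero. The paper phrases the reduction step more elementarily (simply extending $\phi^{-1}$ to a ring homomorphism and noting it carries the norm to a product), whereas you invoke prime splitting in $\ZZ[\mu_{p-1}]$; the two are equivalent.
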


\begin{proof}
From \fullcref{CanAssume}{DoubleOrder2}, we know that $a_i = (0, \quot{a_i})$ for each~$i$. Also, if $B_0$ has an element~$b$, then we may assume $b = (0, \quot b)$, after conjugating by an element of~$\ZZ_p$. So $b_1,\ldots,b_n$ are the only elements of~$S$ that contribute to $\volt_{\ZZ_p \rtimes_\tau \quot G, S}(H)$. Therefore, if we write $b_j = (z_j, \quot{b_j})$, then, from the definition of $S_1,\ldots,S_n$, we have
	$$ \volt_{\ZZ_p \rtimes_\tau \quot G, S}(H)
		= \sum_{j=1}^n \, z_j   \volt_{\ZZ_p \rtimes_\tau \quot G, S_j}(H) . $$
Note that $z_1,\ldots,z_n$ cannot all be~$0$, since $\langle S \rangle = G$. 
Therefore, if $\volt_{\ZZ_p \rtimes_\zeta \quot G, S}(H_i) = 0$ for all~$i$, then elementary linear algebra tells us that
	\begin{align} \label{det=0} \tag{$*$}
	\text{$\Delta_p = 0$, where $\Delta_p = 
	 \det \bigl[ \volt_{\ZZ_p \rtimes_\tau \quot G, S_j}(H_i) \bigr]$}
	 . \end{align}
We will show that this leads to a contradiction. (So there must be a hamiltonian cycle with nonzero voltage, so \cref{FGL} applies.)

The isomorphism $\phi^{-1}\colon \mu_{p-1} \to \ZZ_p^\times$ extends to a unique ring homomorphism $\Phi \colon Z \to \ZZ_p$. Since $\Phi \circ \zeta = \tau$ (and $\Phi$ is a ring homomorphism), it is easy to see that pairing $\Phi$ with the identity map on~$\quot G$ yields a group homomorphism $\widehat \Phi \colon Z \rtimes_\zeta \quot G \to \ZZ_p \rtimes_\tau \quot G$. Therefore
	$$ \Phi \bigl( \volt_{Z \rtimes_\zeta \quot G, S_j}(H) \bigr) = \volt_{\ZZ_p \rtimes_\tau \quot G, S_j}(H) $$
for every hamiltonian cycle~$H$ in $\Cay(\quot G; \quot S)$. Since $\Phi$ is a ring homomorphism (and determinants are calculated simply by adding and multiplying), this implies
	$$ \text{$\Phi(\Delta) = \Delta_p$, where $\Delta = \det \bigl[ \volt_{Z \rtimes_\zeta \quot G, S_j}(H_i) \bigr]$} .$$

The assumption that $\norm (\Delta)$ is not divisible by~$p$ tells us that $\Phi \bigl( \norm (\Delta) \bigr)\neq 0$. Since, by definition, $\norm (\Delta)$ is the product of~$\Delta$ with its other conjugates, and the ring homomorphism~$\Phi$ respects multiplication, we conclude that $\Phi(\Delta) \neq 0$. In other words, $\Delta_p \neq 0$. This contradiction to \pref{det=0} completes the proof.
\end{proof}

\begin{prop} \label{ApplyVoltageMatrix}
If the generating set~$\quot S$ of~$\quot G$ is irredundant, then $\Cay(G;S)$ is hamiltonian.
\end{prop}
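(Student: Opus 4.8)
The plan is to find a hamiltonian cycle in $\Cay(\quot G;\quot S)$ whose voltage generates~$\ZZ_p$ and then apply \cref{FGL}. Since $\ZZ_p$ has prime order, a voltage generates~$\ZZ_p$ as soon as it is nonzero, so the whole task reduces to producing a hamiltonian cycle of the quotient with nonzero voltage in $\ZZ_p\rtimes_\tau\quot G$.

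First I would set aside the configurations listed in \cref{IrredSpecialCases}: these are exactly the situations (the $A_4$-type, dihedral-type, and central-involution cases) in which the hamiltonian cycles of $\Cay(\quot G;\quot S)$ are so rigid that every one of them has vanishing voltage, and for each of them \cref{IrredSpecialCases} already exhibits a hamiltonian cycle of $\Cay(G;S)$ by other means. So I may assume that none of those special cases occurs.

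Next I would normalize~$S$ into the shape required by \cref{VoltageMatrix}. A generator $s$ centralizes~$\ZZ_p$ precisely when $\tau(\quot s)=1$. By \fullcref{CanAssume}{DoubleOrder2}, each generator whose image has order~$2$ and centralizes $\ZZ_p$ projects trivially to $\ZZ_p$; these become the $a_i$. Conjugating $S$ by a suitable element of $\ZZ_p$, I may assume that one non-centralizing generator (if present) also projects trivially, so it can play the role of $b_0$, while the remaining centralizing generators of order $>2$ are the $b_1,\dots,b_n$, the only generators that can carry a nonzero $\ZZ_p$-component. With $S$ in this form, \cref{VoltageMatrix} reduces everything to finding hamiltonian cycles $H_1,\dots,H_n$ for which $\norm\bigl(\det[\volt_{Z\rtimes_\zeta\quot G,\,S_j}(H_i)]\bigr)$ is not divisible by~$p$. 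The essential point is that $\zeta$ has image of order dividing $|\quot G|<48$, so these voltages lie in a fixed cyclotomic ring and the norm is an integer independent of~$p$, which the computation verifies to be nonzero. I would therefore let the computer loop over all quotients $\quot G$ of order~$<48$, all irredundant~$\quot S$, and all homomorphisms $\zeta\colon\quot G\to\CC^\times$ (the possible twists), use the routine \function{SeveralHamCycsInCay} to find cycles $H_i$ making the determinant nonzero, and record the norm; for each of the finitely many primes dividing that norm, the corresponding $\Cay(G;S)$ is checked directly with \function{LKH}.

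The hard part, I expect, is the normalization step together with the guarantee that suitable cycles always exist. Concretely, one must control the number of non-centralizing generators so that the hypothesis $|B_0|\le 1$ of \cref{VoltageMatrix} holds (using irredundancy of $\quot S$ and $k<48$), and one must show that whenever no collection of $n$ hamiltonian cycles yields a voltage matrix of nonzero norm, the configuration is necessarily one of those already dispatched in \cref{IrredSpecialCases}. In other words, the real content is verifying both that the exceptional list is exhaustive and that the computer search succeeds in every remaining case.
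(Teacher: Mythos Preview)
Your proposal is essentially the paper's own argument: loop over all $(\quot G,\quot S,\zeta)$, invoke \cref{IrredSpecialCases} where it applies, otherwise apply \cref{VoltageMatrix} with cycles supplied by \function{SeveralHamCycsInCay} and \function{FindNonzeroDet}, and fall back to \function{LKH} for the finitely many primes dividing the resulting norm.

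One point deserves correction, since you flag it as the ``hard part.'' You have misread the decomposition in \cref{VoltageMatrix}: the $b_1,\dots,b_n$ are \emph{not} restricted to centralizing generators of quotient-order~$>2$. They are simply all the generators left over after one has set aside the $a_i$ (centralizing with $|\quot{a_i}|=2$) and the single optional~$b_0$. If $S$ happens to contain several non-centralizing generators, you pick \emph{one} of them to be~$b_0$ (its $\ZZ_p$-component can then be killed by a $\ZZ_p$-conjugation) and simply put the others among the $b_j$'s, where their $\ZZ_p$-components become free parameters~$z_j$ in the linear-algebra argument. Thus the hypothesis $|B_0|\le 1$ is not a constraint to be enforced via irredundancy or the bound $k<48$; it is always satisfiable by fiat. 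The only genuine content is that the computer search confirms, in every case not covered by \cref{IrredSpecialCases}, that $n$ cycles with nonzero voltage determinant do exist---and this is exactly what the paper reports.
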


\begin{justification} 
For each group~$\quot G$ of order less than 48, and each irredundant generating set~$\quot S$ of~$\quot G$, the \GAP\ program in the file \filename{3-4-IrredundantSBar.gap} constructs a list \function{SeveralHamCycsInCG} of some hamiltonian cycles in $\Cay(\quot G; \quot S)$ (by calling the function \function{SeveralHamCycsInCay}). 

Now, the program considers each abelian character $\zeta$ of~$\quot G$. 
If \cref{IrredSpecialCases} (or some other lemma) provides a hamiltonian cycle in $\Cay \bigl( \ZZ_p \rtimes_\tau \quot G; S \bigr)$, then nothing more needs to be done. Otherwise, the program constructs the list $S_1,\ldots,S_n$ of generating sets described in \cref{VoltageMatrix}, and calculates the voltage $\volt_{Z \rtimes_\zeta \quot G, S_j}(H_i)$ for each $H_i$ in \function{SeveralHamCycsInCG}.

Now, the program calls the function \function{FindNonzeroDet}, which returns a list $i_1,\ldots,i_n$ of indices. The program then verifies that if we use $H_{i_1},\ldots,H_{i_n}$ as the hamiltonian cycles in \cref{VoltageMatrix}, then the norm of the determinant of the voltages  is nonzero. Hence, \cref{VoltageMatrix} provides a hamiltonian cycle in $G = \ZZ_p \rtimes_\tau \quot G$ for all but the finitely many primes~$p$ that are a divisor of this norm.

To deal with these remaining primes, the program calls the function \function{CallLKHOnLiftsOfSBar}, which constructs every possible lift of~$\quot S$ to a generating set~$S$ of~$G$, and uses \function{LKH} to verify that $\Cay(G;S)$ is hamiltonian.
\end{justification}

\begin{rem}
It is not necessary to verify the source code of \function{SeveralHamCycsInCay} or \function{FindNonzeroDet}, because the output of both of these programs is validated before it is used.
\end{rem}

\section{Redundant generating sets of the quotient} 

We now assume that the generating set~$\quot S$ of~$\quot G$ is redundant (but $S$ is irredundant, and the other assumptions stated in \cref{GSNotation,GNotation,CanAssume} are also assumed to hold). The following well-known observation tells us that every $S$ of this type can be constructed by choosing an irredundant generating set~$\quot{S_0}$ of~$\quot G$ and an element~$\quot a$ of~$\quot G$, and letting $S = \bigl( \{0\} \times \quot{S_0} \bigr) \cup \{(1,\quot a)\}$.

\begin{lem} \label{S=S0+a}
Assume the generating set $\quot S$ of~$\quot G$ is redundant. Then, perhaps after conjugating by an element of~$\ZZ_p$, there is an element~$a$ of~$\quot S$, such that if we let $S_0 = S \smallsetminus \{a\}$, then 
	\begin{enumerate}
	\item $\quot {S_0}$ is an irredundant generating set of $\quot G$, 
	and
	\item $S_0 \subseteq \{0\} \rtimes \quot G$.
	\end{enumerate}
\end{lem}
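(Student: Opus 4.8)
The plan is to exploit the prime order of $\ZZ_p$ together with the irredundancy of~$S$ to show that a \emph{redundant} $\quot S$ is forced to contain exactly one ``extra'' generator, and then to normalize the resulting complement by conjugating with an element of~$\ZZ_p$. First I would record, from \fullcref{CanAssume}{DoubleEdge}, that the natural homomorphism $\pi\colon G \to \quot G$ restricts to a bijection from~$S$ onto~$\quot S$, so deleting an element of~$S$ corresponds to deleting its image in~$\quot S$. Since $\quot S$ generates~$\quot G$, I can pick an irredundant generating subset $\quot{S_0} \subseteq \quot S$ by removing redundant elements one at a time; because $\quot S$ is redundant it is not itself irredundant, so $\quot{S_0} \subsetneq \quot S$. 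Let $A' \subseteq S$ be the (nonempty) set of elements whose projections lie in $\quot S \smallsetminus \quot{S_0}$, and set $S_0 = S \smallsetminus A'$. This already secures conclusion~(1); the remaining work is to prove $|A'| = 1$ and to arrange conclusion~(2).

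Next I would analyze $N := \langle S_0 \rangle$. Since $\pi(S_0) = \quot{S_0}$ generates~$\quot G$, the subgroup $N$ projects \emph{onto}~$\quot G$. On the other hand $S_0 \subsetneq S$ and $S$ is irredundant, so $N \neq G$. As $\ZZ_p$ has prime order, its only subgroups are the trivial one and $\ZZ_p$ itself; if $N \cap \ZZ_p = \ZZ_p$, then $N \supseteq \ZZ_p$ together with $\pi(N) = \quot G$ would force $N = G$, a contradiction. Hence $N \cap \ZZ_p = \{e\}$, so $N$ is a complement to~$\ZZ_p$ and $G = \ZZ_p \rtimes N$. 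Each $a \in A'$ then has a unique factorization $a = z_a\, n_a$ with $z_a \in \ZZ_p$ and $n_a \in N$.

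The crux, and the step I expect to be the main obstacle, is showing $|A'| = 1$. Since $\langle S \rangle = G$ while $\langle S_0 \rangle = N \neq G$, some $a \in A'$ must have $z_a \neq 0$. For such an~$a$ note that $z_a = a\, n_a^{-1} \in \langle S_0 \cup \{a\} \rangle$; because $z_a$ is a nonzero element of the prime-order group~$\ZZ_p$, it already generates all of~$\ZZ_p$, so $\langle S_0 \cup \{a\} \rangle \supseteq \langle N, \ZZ_p \rangle = \ZZ_p N = G$. Thus $S_0 \cup \{a\}$ generates~$G$, and the irredundancy of~$S$ forbids this from being a \emph{proper} subset; hence $A' = \{a\}$. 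In particular this unique~$a$ has $z_a \neq 0$, and $S_0 = S \smallsetminus \{a\}$. The delicate point here is that the conclusion rests on the interplay of two facts that have to be invoked together: that $S$ admits no proper generating subset, and that a single nonzero element of~$\ZZ_p$ suffices to recover the whole of~$\ZZ_p$.

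Finally I would establish conclusion~(2). At this stage $S_0 \subseteq N$, but $N$ need not be the standard complement $\{0\} \rtimes \quot G$. Because $\gcd(p,k) = 1$ by \fullcref{CanAssume}{p>k}, the Schur--Zassenhaus theorem \cite{Wikipedia-SchurZassenhaus}, in its conjugacy form (valid since $\ZZ_p$ is abelian), guarantees that $N$ is conjugate to $\{0\} \rtimes \quot G$ by some element $z \in \ZZ_p$. Replacing $S$ by $z S z^{-1}$ then moves $S_0$ into $\{0\} \rtimes \quot G$, giving~(2); and since conjugation by $z \in \ZZ_p$ acts trivially on the quotient, $\quot S$ is unchanged, so $\quot{S_0}$ remains an irredundant generating set and~(1) persists. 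This completes the plan.
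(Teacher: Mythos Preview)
Your proof is correct and follows essentially the same approach as the paper's: choose an irredundant $\quot{S_0}\subseteq\quot S$, show that $\langle S_0\rangle$ is a complement to~$\ZZ_p$ (equivalently, a maximal subgroup of index~$p$), use irredundancy of~$S$ to force $|S\smallsetminus S_0|=1$, and finish with the conjugacy part of Schur--Zassenhaus. The only cosmetic difference is that the paper deduces $|S\smallsetminus S_0|=1$ directly from maximality of $\langle S_0\rangle$ (adding \emph{any} outside element already generates~$G$), whereas you single out an element with nonzero $\ZZ_p$-component; both arguments are equivalent here.
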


\begin{proof}
By assumption, there is a proper subset~$S_0$ of~$S$, such that $\langle \quot{S_0} \rangle = \quot G$. 
By choosing $S_0$ to be of minimal cardinality, we may assume that $\quot {S_0}$ is irredundant. Since $|\langle S_0 \rangle|$ is divisible by $|\langle \quot{S_0} \rangle| = |\quot G| = |G|/p$, and is a proper divisor of $|G|$, we must have $|\langle S_0 \rangle| = |G|/p$. So $\langle S_0 \rangle$ is a maximal subgroup of~$G$. Therefore, we have $\langle S_0, a \rangle = G$ for any element~$a$ of~$S$ that is not in~$S_0$. Since $S$ is irredundant, we conclude that $S = S_0 \cup \{a\}$.

Since $|\langle S_0 \rangle| = |G|/p$, we see from \fullcref{CanAssume}{p>k} that $\langle S_0 \rangle$ is a Hall subgroup of~$G$. Then, since $\ZZ_p$ is a solvable normal complement, the Schur-Zassenhaus Theorem \cite{Wikipedia-SchurZassenhaus} tells us that, after passing to a conjugate, we have $\langle S_0 \rangle = \{0\} \rtimes \quot G$.
\end{proof}

\begin{lem} \label{RedundantByHamConn}
Assume 
	\begin{itemize}
	\item $S = \bigl( \{ 0 \}\times \quot{S_0} \bigr) \cup \{ (1,\quot a) \}$, 
	\item $\quot{S_0}$ is an irredundant generating set of~$\quot G$, 
	\item either $\Cay(\quot G; \quot{S_0})$ is not bipartite, or $\Cay(\quot G; \quot{S})$ is  bipartite,
	and
	\item $|S_0 \cup S_0^{-1}| \ge 3$.
	\end{itemize}
Then $\Cay(G;S)$ is hamiltonian.
\end{lem}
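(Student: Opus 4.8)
The plan is to build, inside the quotient $\Cay(\quot G; \quot S)$, a single hamiltonian cycle whose voltage is nonzero, and then apply \cref{FGL}. The hypotheses make this voltage easy to control: every element of $S_0$ lies in $\{0\}\rtimes\quot G$, so its lift has zero $\ZZ_p$-coordinate, whereas the lift $(1,\quot a)$ of the extra generator has nonzero $\ZZ_p$-coordinate. I would therefore aim for a hamiltonian cycle in the quotient that traverses the edge labelled $\quot a^{\pm1}$ \emph{exactly once}. For such a cycle, writing each lift as $(z_i,g_i)$, only one $z_i$ is nonzero, so the $\ZZ_p$-coordinate of the voltage $\prod_i (z_i,g_i)$ equals $\tau(g_1\cdots g_{j-1})\,z_j$, where $j$ is the single index with $z_j\neq 0$. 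Since $\tau$ takes values in $\ZZ_p^\times$ and $z_j\neq 0$, this is a nonzero element of $\ZZ_p$, hence a generator (as $p$ is prime).

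Everything thus reduces to finding a hamiltonian cycle in $\Cay(\quot G;\quot S)$ using the $\quot a$-edge exactly once. Deleting that edge, this is precisely a hamiltonian path in $\Cay(\quot G;\quot{S_0})$ joining $e$ to its neighbor $\quot a$ across the $\quot a$-edge. Note first that $\quot a\neq e$ (otherwise $(1,\quot a)\in\ZZ_p$, contradicting \fullcref{CanAssume}{notZp}) and that, by \fullcref{CanAssume}{DoubleEdge}, $\quot a\notin\quot{S_0}\cup\quot{S_0}^{-1}$, so the edge $e\sim\quot a$ is genuinely new and the closed-up walk uses $\quot a$ exactly once. Since $|\quot G|=k<48$ and $\Cay(\quot G;\quot{S_0})$ is connected of valence $|S_0\cup S_0^{-1}|\ge 3$, \cref{hamconn<64} tells us it is either hamiltonian connected or hamiltonian laceable.

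I would then split along the third hypothesis. If $\Cay(\quot G;\quot{S_0})$ is not bipartite, it is hamiltonian connected, and a hamiltonian path from $e$ to $\quot a$ exists immediately. Otherwise $\Cay(\quot G;\quot{S_0})$ is bipartite, whence the hypothesis forces $\Cay(\quot G;\quot S)$ to be bipartite as well; then $\Cay(\quot G;\quot{S_0})$ is hamiltonian laceable, and the one point needing care—the main obstacle—is verifying that $e$ and $\quot a$ lie in \emph{opposite} bipartition classes. I would settle this by uniqueness of the bipartition of a connected bipartite graph: the bipartition of $\Cay(\quot G;\quot S)$ restricts to a bipartition of its connected spanning subgraph $\Cay(\quot G;\quot{S_0})$, so the two graphs share the same bipartition; since $e$ and $\quot a$ are adjacent in the bipartite graph $\Cay(\quot G;\quot S)$, they lie in different classes of $\Cay(\quot G;\quot{S_0})$ too, and laceability supplies the path. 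In either case, closing the hamiltonian path with the $\quot a$-edge gives a hamiltonian cycle of nonzero voltage, so \cref{FGL} completes the proof.
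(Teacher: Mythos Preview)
Your proof is correct and follows essentially the same approach as the paper's: use \cref{hamconn<64} to find a hamiltonian path in $\Cay(\quot G;\quot{S_0})$ between $e$ and $\quot a^{\pm1}$, close it with the $\quot a$-edge, and observe that the resulting voltage is nonzero because only the $\quot a$-generator contributes a nonzero $\ZZ_p$-coordinate, so \cref{FGL} applies. The paper's version is terser---it simply places the $\quot a$-edge first so the voltage is literally~$z$, and it does not spell out why the laceable case succeeds---whereas you handle the bipartite case explicitly by noting that the bipartition of $\Cay(\quot G;\quot S)$ restricts to that of $\Cay(\quot G;\quot{S_0})$, forcing $e$ and $\quot a$ into opposite classes; this is a welcome clarification rather than a different argument.
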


\begin{proof}
We know from \fullcref{CanAssume}{notZp} that $\quot a \neq \quot e$. Therefore, \cref{hamconn<64} tells us there is a hamiltonian path $(\quot{s_i})_{i=1}^{n-1}$ from~$\quot e$ to~$\quot a \, ^{-1}$ in $\Cay(\quot G; \quot{S_0})$. So $H = \bigl( \quot a, (\quot{s_i})_{i=1}^{n-1} \bigr)$ is a hamiltonian cycle in $\Cay(\quot G; \quot S)$.

Write $a = (z, \quot a)$, with $z \in \ZZ_p \smallsetminus \{0\}$. 
Since $S_0  \subseteq \{0\} \rtimes \quot G$, we must have $z \neq 0$, and the voltage $a s_1s_2\cdots s_{n-1}$ of~$H$ is~$z$. Hence, \cref{FGL} provides a hamiltonian cycle in $\Cay(G ; S)$.
\end{proof}

To complete the proof of \cref{kp}, the following two results consider the special cases that are not covered by \cref{RedundantByHamConn}.

\begin{prop} \label{BipAndNonbip}
Assume 
	\begin{itemize}
	\item $S = \bigl( \{ 0 \}\times \quot{S_0} \bigr) \cup \{ (1,\quot a) \}$, 
	\item $\quot{S_0}$ is an irredundant generating set of~$\quot G$, 
	\item $\Cay(\quot G; \quot{S_0})$ is bipartite,
	and 
	\item $\Cay(\quot G; \quot{S})$ is not bipartite.
	\end{itemize}
Then $\Cay(G;S)$ is hamiltonian.
\end{prop}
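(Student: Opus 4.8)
The plan is to produce a hamiltonian cycle in $\Cay(\quot G; \quot S)$ whose voltage generates~$\ZZ_p$, so that \cref{FGL} finishes the job; the entire difficulty lies in arranging a \emph{nonzero} voltage. First I would record why the single-edge argument of \cref{RedundantByHamConn} breaks down here. Because $\Cay(\quot G; \quot{S_0})$ is bipartite, the bipartition class containing~$\quot e$ is an index-two subgroup $\quot H \le \quot G$ with $\quot{S_0} \cap \quot H = \emptyset$, and the extension $\Cay(\quot G; \quot S)$ fails to be bipartite precisely when $\quot a \in \quot H$, i.e.\ $\quot a$ lies in the \emph{same} class as~$\quot e$. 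Since each $\quot{S_0}$-edge switches class while each $\quot a$-edge preserves it, and $|\quot G|$ is even, every hamiltonian cycle of $\Cay(\quot G; \quot S)$ must use the generator~$\quot a$ an \emph{even} number of times; in particular it can never use~$\quot a$ exactly once, which is exactly the configuration on which \cref{RedundantByHamConn} depended.

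So the plan is to build a cycle that uses~$\quot a$ exactly \emph{twice}, with both copies traversed in the same direction, so that their voltage contributions reinforce instead of cancelling. Concretely I would splice together two hamiltonian paths of the laceable graph $\Cay(\quot G; \quot{S_0})$ by two $\quot a$-edges, obtaining a cycle of the shape
$$ \quot e,\ \quot a,\ \ldots,\ \quot u,\ \quot a\,\quot u,\ \ldots,\ \quot e , $$
in which the steps $\quot e \to \quot a$ and $\quot u \to \quot a\,\quot u$ are the two $\quot a$-edges (both left multiplication by~$\quot a$), and the two stretches $\quot a,\ldots,\quot u$ and $\quot a\,\quot u,\ldots,\quot e$ are hamiltonian paths $P_1,P_2$ of $\Cay(\quot G;\quot{S_0})$ partitioning~$\quot G$. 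When $\quot G$ centralizes~$\ZZ_p$ (so $G \iso \ZZ_p \times \quot G$) the two $\quot a$-edges contribute equally and the voltage is~$\pm 2$, a generator of~$\ZZ_p$ for every prime $p>2$, and \cref{FGL} applies. For a general twist~$\tau$ the voltage is instead a sum $\pm\tau(g_1)\pm\tau(g_2)$ of character values, an algebraic integer in the ring~$Z$ of \cref{VoltageMatrix}; I would argue it is nonzero and then invoke the norm computation of \cref{VoltageMatrix} to deduce that it fails to generate~$\ZZ_p$ for only finitely many~$p$.

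The hard part is the splicing step: the pieces $P_1,P_2$ live in \emph{induced} subgraphs of $\Cay(\quot G;\quot{S_0})$ on a partition $\quot G = V(P_1)\sqcup V(P_2)$, and these induced subgraphs are not themselves Cayley graphs, so the hamiltonian laceability granted by \cref{hamconn<64} does not apply to them verbatim. I see two ways past this. The clean theoretical route avoids the partition entirely: since $\Cay(\quot G;\quot S)$ is non-bipartite of valence $\ge 3$, \cref{hamconn<64} makes it hamiltonian \emph{connected}, and one can instead demand a hamiltonian cycle of $\Cay(\quot G;\quot S)$ that \emph{contains} two prescribed equally-oriented $\quot a$-edges, then read off its voltage. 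The route matching the rest of the paper is computational: for each $\quot G$ of order below~$48$, each bipartite $\Cay(\quot G;\quot{S_0})$, and each nonbipartite extension~$\quot a$, have \LKH\ (via its required-edge feature) produce such cycles, evaluate their voltages over~$Z$, select a combination whose determinant has norm prime to~$p$ exactly as in \cref{VoltageMatrix}, and dispatch the finitely many exceptional primes dividing that norm by a direct \LKH\ search in $\Cay(G;S)$. Finally I would treat separately the degenerate valence-two case $|\quot{S_0}\cup\quot{S_0}^{-1}| = 2$ (which forces $\quot G$ cyclic and $\Cay(\quot G;\quot{S_0})$ a single even cycle), where the conclusion follows from the cyclic and dihedral special cases already isolated in \cref{IrredSpecialCases}.
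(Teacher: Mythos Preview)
Your computational route is essentially what the paper does: for each triple $(\quot{G}, \quot{S_0}, \quot{a})$ the program harvests several hamiltonian cycles of $\Cay(\quot{G};\quot{S})$, computes their voltages in $Z \rtimes_\zeta \quot{G}$ for each abelian character~$\zeta$, and then calls \LKH\ directly on $\Cay(\ZZ_p \rtimes_\tau \quot{G}; S)$ for the finitely many primes dividing the resulting norms. Two points of over-engineering on your side: there is no need to force the cycles to use~$\quot{a}$ exactly twice (any cycle with nonzero voltage in~$Z$ suffices, and your nice parity observation only rules out using it \emph{once}), and the ``determinant'' of \cref{VoltageMatrix} is here $1\times 1$, since only the single generator $(1,\quot{a})$ carries a nonzero $\ZZ_p$-component, so one is really just taking the gcd of the norms of individual voltages.

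Your theoretical route, however, has a genuine gap. Hamiltonian connectivity of $\Cay(\quot{G};\quot{S})$ yields a hamiltonian cycle through any \emph{one} prescribed edge, but not through two prescribed edges; that is a strictly stronger property which \cref{hamconn<64} does not supply, so you cannot ``demand'' such a cycle. Even granting one, for nontrivial~$\tau$ the two contributions could still sum to zero in~$Z$, so further control over the positions of the two $\quot{a}$-edges would be required. Finally, your claim that the valence-two subcase follows from \cref{IrredSpecialCases} is mistaken: that lemma treats \emph{irredundant}~$\quot{S}$, whereas here $\quot{S} = \quot{S_0} \cup \{\quot{a}\}$ is redundant by hypothesis. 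In the paper that subcase is simply absorbed into the same computation (the statement of \cref{BipAndNonbip} carries no valence restriction), and is also handled independently by \cref{valence2}.
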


\begin{justification}
The \GAP\ program in \filename{4-3-RedundantSBar.gap}:
	\begin{itemize}
	\item loops through all groups~$\quot G$ of order less than $48$,
	\item loops through all irredundant generating sets~$\quot{S_0}$ of~$\quot G$, such that $\Cay(\quot G; \quot{S_0})$ is bipartite,
	\item loops through all nonidentity elements~$\quot a$ of~$\quot G$, such that $\Cay(\quot G; \quot S)$ is not bipartite, where $\quot S = \quot{S_0} \cup \{\quot a \}$,
	\item constructs the set $S = \bigl( \{0\} \times \quot{S_0} \bigr) \cup \{(1, \quot a)\}$,
	\item makes a list of a few hamiltonian cycles in $\Cay(\quot G; \quot S)$ (by calling the function \function{SeveralHamCycsInRedundantCay},
	\item loops through all abelian characters $\zeta$ of~$\quot G$,
	\item ignores this character if the condition in \fullcref{CanAssume}{DoubleOrder2} is not violated,
	\item ignores this character if $S$ is not a minimal generating set of~$G$,
	\item calculates the GCD of the norms of the voltages of the hamiltonian cycles in the list,
	and
	\item uses \function{LKH} to find a hamiltonian cycle in $\Cay(\ZZ_p \rtimes_\tau \quot G; S)$ for each prime~$p$ that divides the GCD, by calling \function{CallLKHOnLiftsOfSBar}. 
	\end{itemize}
(The use of \function{CallLKHOnLiftsOfSBar} in the last step is overkill, because we are interested only in the one particular lift $S$ of~$\quot S$, but we are calling a function that checks all possible lifts. It does not seem worthwhile to write an verify another \GAP\ program, just to eliminate this slight waste.)	
\end{justification}

\begin{rem}
It is not necessary to verify the source code of \function{SeveralHamCycsInRedundantCay}, because the output of this program is validated before it is used.
\end{rem}

\begin{lem} \label{valence2}
Assume 
	\begin{itemize}
	\item $S = \bigl( \{ 0 \}\times \quot{S_0} \bigr) \cup \{ (1,\quot a) \}$, 
	\item $\quot{S_0}$ is an irredundant generating set of~$\quot G$, 
	and 
	\item $|S_0 \cup S_0^{-1}| \le 2$.
	\end{itemize}
Then $\Cay(G;S)$ is hamiltonian.
\end{lem}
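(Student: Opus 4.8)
The plan is to use the hypothesis $|S_0 \cup S_0^{-1}| \le 2$ to pin down the isomorphism type of~$\quot G$, and then to build explicit hamiltonian cycles in $\Cay(\quot G; \quot S)$ whose voltage we can compute and show to be nonzero. First I would observe that, since $\quot{S_0}$ generates~$\quot G$, the vertex-transitive graph $\Cay(\quot G; \quot{S_0})$ is connected of valence $|S_0 \cup S_0^{-1}| \le 2$, hence is a single cycle of length $|\quot G| = k$. (Valence~$1$ cannot occur: it would force $\quot G \iso \ZZ_2$, and then $\quot a$ would have to coincide with the unique nontrivial element of~$\quot{S_0}$, contrary to \fullcref{CanAssume}{DoubleEdge}.) Consequently $\quot{S_0}$ is either a single element~$\quot s$, so $\quot G = \langle \quot s \rangle$ is cyclic, or a pair of involutions $\{\quot s, \quot t\}$, so $\quot G = \langle \quot s, \quot t\rangle$ is dihedral; in both cases $\quot a \notin \quot{S_0} \cup \quot{S_0}^{-1}$ (using \fullcref{CanAssume}{notZp} and \fullcref{CanAssume}{DoubleEdge}), and $\Cay(\quot G; \quot S)$ is exactly the $k$-cycle $\Cay(\quot G; \quot{S_0})$ together with the chords supplied by~$\quot a$.

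The quotient graph is trivially hamiltonian (the $\quot{S_0}$-cycle is already a hamiltonian cycle), so the entire difficulty is the \emph{voltage}. Because every element of~$S_0$ lies in $\{0\}\times\quot G$ and so has trivial $\ZZ_p$-component, the only steps of a hamiltonian cycle~$H$ in $\Cay(\quot G;\quot S)$ that contribute to its voltage are the occurrences of the generator $(1,\quot a)$. Writing $\zeta = \phi\circ\tau$ as in \cref{VoltageMatrix}, an elementary computation in $Z \rtimes_\zeta \quot G$ gives
$$ \volt_{Z \rtimes_\zeta \quot G}(H) = \sum_{\ell} \epsilon_\ell\, \zeta(g_\ell), $$
where the sum is over the occurrences of $\quot a^{\pm1}$ in~$H$, the sign~$\epsilon_\ell$ records the direction of that step, and $g_\ell \in \quot G$ is the partial product of~$H$ immediately before it. The idea is then to choose~$H$ so that this sum is a nonzero element of the ring~$Z$: by the same mechanism as in \cref{VoltageMatrix,ApplyVoltageMatrix}, $\volt(H)$ generates~$\ZZ_p$ for every prime~$p$ not dividing $\norm\bigl(\volt_{Z \rtimes_\zeta \quot G}(H)\bigr)$, so \cref{FGL} applies, and the finitely many remaining primes are dispatched by verifying with~\LKH\ that all of their (finitely many) lifts are hamiltonian.

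The concrete task is therefore to produce, for each abelian character~$\zeta$ of~$\quot G$, a hamiltonian cycle whose voltage does not vanish. In the cyclic case, identifying $\quot G$ with $\ZZ_k$ and $\quot a$ with $j \in \{2,\ldots,k-2\}$, I would split the $k$-cycle into the arc $\{1,\ldots,j\}$ and its complement and reconnect them by the two chords $\{0,j\}$ and $\{1,j+1\}$; this yields the hamiltonian cycle $(\,\quot a,\ \quot s^{-(j-1)},\ \quot a,\ \quot s^{\,k-j-1}\,)$, which uses~$\quot a$ forward exactly twice, at partial products $\quot e$ and~$\quot s$, so its voltage is $1 + \zeta(\quot s)$. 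This is nonzero unless $\zeta(\quot s) = -1$. The dihedral case is analogous, with the simplification that every abelian character takes only the values~$\pm1$ (so each voltage is an integer).

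The hard part, and the main obstacle, is exactly the cancellation in the residual sub-cases — most notably $\tau(\quot s) = -1$ (equivalently $\zeta(\quot s)=-1$), where $1 + \zeta(\quot s) = 0$, and the analogous sign cancellations on the dihedral side. I would handle these by a second, shifted choice of arcs and chords that moves the two forward occurrences of~$\quot a$ to partial products differing by~$\quot s^2$, giving voltage $1 + \zeta(\quot s)^2 = 2 \neq 0$ in precisely the troublesome case. Since $\quot G$ is cyclic or dihedral of order~$<48$, only finitely many triples $(\quot G,\quot a,\zeta)$ need to be inspected, so this finite case analysis (or, equivalently, a short exhaustive \GAP/\LKH\ search over these quotients in the spirit of \cref{BipAndNonbip}) completes the proof.
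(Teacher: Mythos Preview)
Your proposal is correct in its overall architecture and ends up at essentially the same proof as the paper: identify $\quot G$ as cyclic or dihedral, compute voltages of hamiltonian cycles in $\Cay(\quot G;\quot S)$ with respect to each abelian character~$\zeta$, apply \cref{FGL} for all primes not dividing the norm, and finish the finitely many residual primes with \LKH. That is exactly the scheme in the paper's justification (which simply runs the computation for every $(k,\quot G,\quot a,\zeta)$ without attempting any of the by-hand shortcuts you sketch).

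One concrete point in your by-hand treatment does not work as written. Your first explicit cycle $(\,\quot a,\ \quot s^{-(j-1)},\ \quot a,\ \quot s^{\,k-j-1}\,)$ with voltage $1+\zeta(\quot s)$ is fine, but the proposed ``second, shifted choice'' giving voltage $1+\zeta(\quot s)^2$ cannot be realized with only two forward $\quot a$-edges. In the $k$-cycle with all chords of a fixed length~$j$, a hamiltonian cycle that uses exactly two chord-edges must use two \emph{adjacent} chords --- i.e., chords $\{i,i+j\}$ and $\{i\pm 1,i+j\pm 1\}$ --- because each chord endpoint has degree~$3$, so two cycle-edges must be dropped, and those two deletions have to absorb all four chord endpoints. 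Hence the two forward occurrences of~$\quot a$ are always at partial products differing by~$\quot s^{\pm1}$, and the voltage is always a unit multiple of $1+\zeta(\quot s)$, vanishing again when $\zeta(\quot s)=-1$. (Getting a genuinely different voltage requires using more chord-edges or mixing directions, which you do not spell out.) This gap is harmless for the final result, since you explicitly offer the \GAP/\LKH\ search ``in the spirit of \cref{BipAndNonbip}'' as an alternative --- and that alternative is precisely the paper's proof.
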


\begin{justification}
Since $\quot{S_0}$ is a generating set of~$\quot G$, and $\quot a \notin \{\quot e\} \cup \quot{S_0} \cup \quot{S_0}^{-1}$ (by~\pref{CanAssume-notZp} and~\pref{CanAssume-DoubleEdge} of \cref{CanAssume}), it is easy to see that we must have $k \ge 4$. Also note that the only groups with a $2$-valent, connected Cayley graph are cyclic groups and dihedral groups, and that the $2$-valent generating set of such a group is unique, up to an automorphism of the group.

Applying the same method as in \cref{BipAndNonbip}, the \GAP\ program in \filename{4-5-Valence2.gap}:
	\begin{itemize}
	\item loops through all values of~$k$ from~$4$ to $47$,
	\item loops through the groups~$\quot G$ of order~$k$ that have a $2$-valent, connected Cayley graph, and defines $\quot{S_0}$ to be the $2$-valent generating set of~$\quot G$,
	\item loops through all nonidentity elements~$\quot a$ of~$\quot G$, such that $\quot a \notin \{\quot e\} \cup \quot{S_0} \cup \quot{S_0}^{-1}$ (except that we do not need to consider both $\quot a$ and~$\quot a\,^{-1}$),
	\item constructs the generating set $S = \bigl( \{0\} \times \quot{S_0} \bigr) \cup \{(1, \quot a)\}$ of~$G$,
	\item makes a list of $20$ hamiltonian cycles in $\Cay(\quot G; \quot S)$,
	\item loops through all abelian characters $\zeta$ of~$\quot G$,
	\item ignores this character if the condition in \fullcref{CanAssume}{DoubleOrder2} is not violated,
	\item ignores this character if $S$ is not a minimal generating set of~$G$,
	\item calculates the GCD of the norms of the voltages of the hamiltonian cycles in the list,
	and
	\item uses \function{LKH} to find a hamiltonian cycle in $\Cay(\ZZ_p \rtimes_\tau \quot G; S)$ for each prime~$p$ that divides the GCD, by calling \function{CallLKHOnLiftsOfSBar}. 
	\end{itemize}
(As in \cref{BipAndNonbip}, the use of \function{CallLKHOnLiftsOfSBar} in the last step is overkill.)	
\end{justification}

\section{Known results that can reduce the number of cases} \label{ReduceCases}

There are several results in the literature that can be used to substantially reduce the number of Cayley graphs considered in the proof of \cref{kp} (but then the proof is not self-contained). The following \lcnamecref{KutnarEtal} of Kutnar et al.\ is the main example.

\begin{thm}[{}{\cite[Thm.~1.2]{M2Slovenian-LowOrder}, \cite{Witte-PrimePower}}] \label{KutnarEtal}
Every connected Cayley graph on~$G$ has a
hamiltonian cycle if\/ $|G|$ has any of the following forms\/ {\upshape(}where
$p$, $q$, and~$r$ are distinct primes\/{\upshape):}
\begin{multicols}{2}
 \begin{enumerate}
 
 \item \label{KutnarEtal-kp}
 \newcounter{kp} \setcounter{kp}{\theenumi}
 $k p$, where $1 \le k < 32$, with $k \neq 24$, 
 
 \item \label{KutnarEtal-kpq}
 \newcounter{kpq} \setcounter{kpq}{\theenumi}
 $k p q$, where $1 \le k \le 5$,
 
 \item \label{KutnarEtal-pqr}
 \newcounter{pqr} \setcounter{pqr}{\theenumi}
 $p q r$,
 
 \item \label{KutnarEtal-kp2}
 \newcounter{kpsquared} \setcounter{kpsquared}{\theenumi}
 $k p^2$, where $1 \le k \le 4$, 
 
 \item \label{KutnarEtal-kp3}
 \newcounter{kpcubed} \setcounter{kpcubed}{\theenumi}
 $k p^3$, where $1 \le k \le 2$.
 
\item \label{KutnarEtal-primepower}
 $p^k$.

 \end{enumerate}
 \end{multicols}
 \end{thm}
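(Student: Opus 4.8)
The plan is to recognize that \cref{KutnarEtal} is not one new result but a catalogue of theorems already in the literature, so the honest proof is by citation, with a brief indication of the common mechanism. Parts \pref{KutnarEtal-kp}--\pref{KutnarEtal-kp3} are exactly the content of \cite[Thm.~1.2]{M2Slovenian-LowOrder}, and the prime-power case \pref{KutnarEtal-primepower} is \cite{Witte-PrimePower}; these are the two sources named in the statement, and I would simply invoke them. Before doing so, I would observe that the unifying engine behind every one of these cases is the same \cref{FGL} strategy used throughout this paper.

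The common thread is this. In each listed form, $|G|$ admits a normal subgroup $N$ that is either cyclic of prime order (a minimal normal subgroup, available by solvability in the squarefree cases) or a normal Sylow subgroup (available when a single prime dominates the order). One then argues by induction on $|G|$: the quotient $G/N$ has strictly smaller order, again of one of the listed forms or otherwise elementary to dispatch, so by induction it carries a hamiltonian cycle, and the remaining task is to arrange that the voltage of some such cycle generates~$N$. The forms with a small cofactor~$k$ (items \pref{KutnarEtal-kpq}, \pref{KutnarEtal-pqr}, \pref{KutnarEtal-kp2}, \pref{KutnarEtal-kp3}) are precisely those for which this induction terminates in a manageable number of base cases, each settled by an explicit voltage computation or a finite search.

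I would expect the prime-power case \pref{KutnarEtal-primepower} to be the conceptually distinct piece, and hence the main obstacle were one to reprove the theorem from scratch. Here no single prime dominates, so the reduction instead exploits that a nontrivial $p$-group has nontrivial center: one selects a central subgroup $Z \iso \ZZ_p$, applies \cref{FGL} to the central extension $G \to G/Z$, and uses the abundance of central structure to force a cycle in the quotient whose voltage is a generator of~$Z$. Making this voltage control uniform over all $p$-groups is the delicate step, and it is exactly what is carried out in \cite{Witte-PrimePower}.

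Finally, I would spell out the interplay with this paper's own results, to forestall any appearance of circularity. Item \pref{KutnarEtal-kp} (the case $|G| = kp$ with $k < 32$) is entirely subsumed by \cref{kp}, which treats all $k < 48$, so it follows \emph{a fortiori} from the present work. But the proof of \cref{kp} itself draws on \fullcref{KutnarEtal}{kpq}, \fullcref{KutnarEtal}{kp2}, and \fullcref{KutnarEtal}{primepower}, so those items must be imported from the literature rather than re-derived here. Thus the logically correct summary is that items \pref{KutnarEtal-kpq}--\pref{KutnarEtal-primepower} are cited, and only item \pref{KutnarEtal-kp} is redundant with this paper.
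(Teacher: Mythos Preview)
Your recognition that \cref{KutnarEtal} is a pure citation is correct: the paper provides no proof whatsoever, simply attributing items \pref{KutnarEtal-kp}--\pref{KutnarEtal-kp3} to \cite[Thm.~1.2]{M2Slovenian-LowOrder} and item \pref{KutnarEtal-primepower} to \cite{Witte-PrimePower}. Your sketch of the common Factor-Group-Lemma mechanism is reasonable commentary, though the paper itself offers none of it.

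Your final paragraph, however, misreads the logical architecture of the paper. You claim the proof of \cref{kp} \emph{draws on} \fullcref{KutnarEtal}{kpq}, \fullcref{KutnarEtal}{kp2}, and \fullcref{KutnarEtal}{primepower}, but the paper's explicit stance is the opposite: \cref{KutnarEtal} is placed in \cref{ReduceCases}, whose opening sentence says these results ``can be used to substantially reduce the number of Cayley graphs considered \dots\ (but then the proof is not self-contained).'' Each apparent dependency you cite is deliberately neutralized in the text: \cref{ham64} invokes \fullcref{KutnarEtal}{primepower} but immediately supplies an independent \LKH\ verification; \fullcref{IrredSpecialCases}{ZqxZ4} mentions \fullcref{KutnarEtal}{kpq} but then records a complete self-contained proof; the single group skipped via \fullcref{KutnarEtal}{kp2} in the justification of \fullcref{anomalous}{notZp} is accompanied by a remark that it is dihedral-type and hence trivially handled. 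So the correct summary is that \cref{KutnarEtal} is an \emph{optional shortcut}, not a dependency, for \cref{kp}. The only place the paper genuinely leans on \fullcref{KutnarEtal}{primepower} without a fully executed alternative is the proof of \cref{ham<144} (for $|G|=128$), which is a corollary, not part of the main theorem.
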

 
The following result is also useful.
 
 \begin{thm}[\cite{KeatingWitte,Morris-G'=pqOdd,Morris-G'=2p}] \label{G'=pq}
 Every connected Cayley graph on~$G$ has a hamiltonian cycle if either
 	\begin{enumerate}
	\item \label{G'=pq-primepower}
	$[G,G]$ is cyclic of prime-power order, 
	or
	\item $|[G,G]| = pq$, where $p$ and~$q$ are distinct primes, and $|G|$ is odd,
	or
	\item $|[G,G]| = 2p$, where $p$ is an odd prime.
	\end{enumerate}
\end{thm}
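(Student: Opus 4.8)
The plan is to recognize \cref{G'=pq} as a compilation of three results that are already established in the literature, and to prove it simply by invoking the cited references: part~(1) is the theorem of Keating and Witte \cite{KeatingWitte}, while parts~(2) and~(3) are the main theorems of \cite{Morris-G'=pqOdd} and \cite{Morris-G'=2p}, respectively. No new argument is produced here; the present paper uses the result as a black box, in the same spirit as the theorem of Kutnar et al.\ quoted just above.

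Nevertheless, it is worth recording the common strategy that underlies all three cases, since it is the same machinery used throughout this paper. In each case the hypothesis forces the commutator subgroup $[G,G]$ to be small, so the abelianization $\quot G = G/[G,G]$ is abelian. Every connected Cayley graph on an abelian group is hamiltonian --- indeed, by the Chen--Quimpo theorem \cite{ChenQuimpo} it is hamiltonian connected or laceable --- so $\Cay(\quot G; \quot S)$ always carries a hamiltonian cycle~$H$. One then attempts to apply \cref{FGL} with $[G,G]$ in the role of the cyclic normal subgroup: the cycle~$H$ lifts to a hamiltonian cycle in $\Cay(G;S)$ provided its voltage $\volt_{G,S}(H)$ generates $[G,G]$. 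The entire difficulty is thus concentrated in showing that, for every irredundant generating set, some hamiltonian cycle of the abelian quotient can be chosen (or locally modified) so that its voltage is a generator.

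This is where the three parts diverge in difficulty. In part~(1), where $[G,G]$ is cyclic of prime-power order~$r^d$, a voltage generates $[G,G]$ as soon as its image in $[G,G]/\Phi([G,G]) \iso \ZZ_r$ is nonzero; this is a single congruence condition, which can be arranged by elementary local surgery on the cycle, so this case is the cleanest. Parts~(2) and~(3) are genuinely harder because $[G,G]$ of order $pq$ or~$2p$ need \emph{not} be cyclic: it may be the nonabelian group of that order (for instance a dihedral group), in which case \cref{FGL} does not apply to $[G,G]$ directly. One must then work with a proper normal subgroup, control the voltage modulo each relevant prime simultaneously, and handle the non-abelian internal structure of $[G,G]$ by a direct case analysis.

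The main obstacle, therefore, lies entirely in the cited papers rather than in this excerpt: it is the delicate voltage bookkeeping in \cite{Morris-G'=pqOdd} and \cite{Morris-G'=2p} that guarantees a generating voltage for \emph{every} irredundant generating set, including the low-valence and bipartite configurations that are always the most troublesome. For the purposes of the present paper those arguments are taken as given, and \cref{G'=pq} is simply quoted.
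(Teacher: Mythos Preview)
Your proposal is correct: the paper does not prove \cref{G'=pq} at all but simply quotes it from the cited references \cite{KeatingWitte,Morris-G'=pqOdd,Morris-G'=2p} as a known result in \cref{ReduceCases}, exactly as you say. One small caution: in your expository paragraph you write $\quot G = G/[G,G]$, but in this paper the bar notation is reserved for $G/\ZZ_p$ (see \cref{GNotation}), so that clash should be avoided if the text is to be used verbatim.
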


 \begin{lem} \label{CanAlsoAssume}
 To prove \cref{kp}, one may assume:
 	\begin{enumerate}
	
	\item  \label{CanAlsoAssume-k}
	$k \in \{24, 32, 36, 40, 42, 45\}$.
	
	\item \label{CanAlsoAssume-G'3}
	$|[\quot G, \quot G]| \ge 3$.
	
	\item \label{CanAlsoAssume-G'4abel}
	either $|[\quot G, \quot G]| \ge 4$, or the twist function $\tau$ is nontrivial.

	\end{enumerate}
\end{lem}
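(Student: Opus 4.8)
The plan is to settle all three assertions by feeding the reductions of \cref{CanAssume} into the literature results \cref{KutnarEtal,G'=pq}. Throughout I would invoke \fullcref{CanAssume}{nonabel} (so $G$ is nonabelian), \fullcref{CanAssume}{p>k} (so $p$ exceeds every prime divisor of~$k$, whence $p \nmid k$), and \fullcref{CanAssume}{Zp} (so $\ZZ_p \normal G$ and $|\quot G| = k$). The tool I would set up first is a description of~$[G,G]$. Since the quotient map $\pi \colon G \to \quot G$ is onto, $\pi([G,G]) = [\quot G, \quot G]$, and since $p$ is prime the only subgroups of~$\ZZ_p$ are $\{0\}$ and~$\ZZ_p$, so the normal subgroup $[G,G] \cap \ZZ_p$ is one of these. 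A direct computation in the semidirect product $G = \ZZ_p \rtimes_\tau \quot G$ shows that, for $z \in \ZZ_p$ and $g \in G$ with image~$\quot g$, the commutator is $[g,z] = (\tau(\quot g)-1)z$; hence $\ZZ_p \le [G,G]$ exactly when $\tau$ is nontrivial, and otherwise $G = \ZZ_p \times \quot G$ and $[G,G] \cap \ZZ_p = \{0\}$. Consequently $[G,G]$ sits in an extension $1 \to [G,G]\cap\ZZ_p \to [G,G] \to [\quot G, \quot G] \to 1$, so $|[G,G]| = |[\quot G, \quot G]|$ when $\tau$ is trivial and $|[G,G]| = p\,|[\quot G, \quot G]|$ when $\tau$ is nontrivial.

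For part~\pref{CanAlsoAssume-k} I would run through $k \in \{1,\dots,47\}$ and show every value except $24,32,36,40,42,45$ is already covered. The range $k < 32$ with $k \neq 24$ is immediate from \fullcref{KutnarEtal}{kp}. For the rest I would use that $p$ is larger than every prime divisor of~$k$: when $k$ is a product of two distinct primes ($k = 33,34,35,38,39,46$) the order $kp$ is a product $pqr$ of three distinct primes \fullcsee{KutnarEtal}{pqr}; when $k = 44 = 2^2\cdot 11$ the order is $4\cdot 11\cdot p$, handled by \fullcref{KutnarEtal}{kpq} with leading factor $4 \le 5$; and when $k$ is prime ($k = 37,41,43,47$), $\quot G \iso \ZZ_k$ is abelian, so $[G,G] \le \ZZ_p$ is cyclic of prime-power order and \fullcref{G'=pq}{primepower} applies. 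A factorization-by-factorization check then confirms that $24,32,36,40,42,45$ are precisely the values that escape all six shapes of \cref{KutnarEtal}, so exactly these remain.

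For part~\pref{CanAlsoAssume-G'3} I would treat $|[\quot G, \quot G]| \in \{1,2\}$ via the extension above. If $|[\quot G, \quot G]| = 1$ then $\quot G$ is abelian, so $[G,G]$ is trivial (whence $G$ is abelian, excluded by \fullcref{CanAssume}{nonabel}) or equals~$\ZZ_p$, which is cyclic of prime order, so \fullcref{G'=pq}{primepower} applies. If $|[\quot G, \quot G]| = 2$ then $|[G,G]|$ is either~$2$, again covered by \fullcref{G'=pq}{primepower}, or~$2p$; in the latter case $2 \mid k$ forces $p > 2$, so $p$ is odd and the last part of \cref{G'=pq} applies. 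Finally, for part~\pref{CanAlsoAssume-G'4abel}, the only situation left open by part~\pref{CanAlsoAssume-G'3} is $|[\quot G, \quot G]| = 3$ with $\tau$ trivial; then $G = \ZZ_p \times \quot G$, so $[G,G] \iso [\quot G, \quot G]$ is cyclic of order~$3$ and \fullcref{G'=pq}{primepower} applies once more.

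The only real obstacle is the bookkeeping in part~\pref{CanAlsoAssume-k}: one must verify carefully, for each~$k$, that once $p$ is taken larger than the prime divisors of~$k$ the order $kp$ either does or does not fit one of the six forms of \cref{KutnarEtal} (with the prime-order-commutator argument mopping up the prime values of~$k$), since an oversight would either leave a gap or wrongly include a value. Given the commutator extension, parts~\pref{CanAlsoAssume-G'3} and~\pref{CanAlsoAssume-G'4abel} are then routine consequences of \cref{G'=pq}.
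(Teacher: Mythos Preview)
Your argument is correct and follows essentially the same route as the paper: eliminate the excess values of~$k$ via \cref{KutnarEtal}, and handle parts~\pref{CanAlsoAssume-G'3} and~\pref{CanAlsoAssume-G'4abel} by bounding $|[G,G]|$ in terms of $p\,|[\quot G,\quot G]|$ and invoking \cref{G'=pq}. The only difference is cosmetic: for part~\pref{CanAlsoAssume-k} the paper does \emph{not} invoke \fullcref{CanAssume}{p>k} but instead treats every prime~$p$ directly (so, e.g., $k=33$ splits into the subcases $p=3$, $p=11$, and $p\notin\{3,11\}$, each matched to a different clause of \cref{KutnarEtal}), and for prime~$k$ it cites the $pq$/$p^2$ cases of \cref{KutnarEtal} rather than \fullcref{G'=pq}{primepower}; your shortcut via \fullcref{CanAssume}{p>k} is legitimate and slightly cleaner, at the cost of importing the inductive and computational reductions underlying \cref{CanAssume}.
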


\begin{proof}
\pref{CanAlsoAssume-k} 
If $k < 32$ and $k \neq 24$, then \fullcref{KutnarEtal}{kp} applies. Therefore, either $k$ is in the specified set, or $k \in \{33, 34, 35, 37, 38, 39, 41, 43, 44, 46, 47\}$, in which case some part of \cref{KutnarEtal} applies:
\begin{center}
\begin{tabular}{|c|c||c|c|}
\noalign{\hrule}
\lower3pt\hbox{\Huge\vphantom{$($}}
$k$ & form of $|G| = kp$ & $k$ & form of $|G| = kp$\cr
\noalign{\hrule}
	$ 33$ & $9p$ (if $p = 3$), $3p^2$ (if $p = 11$), or $pqr$ & 
		$ 41$ & $p^2$ (if $p = 41$) or $pq$ \cr	
	$ 34$ &$2p^2$ (if $p = 17$) or $2pq$ & 
		$ 43$ & $p^2$ (if $p = 43$) or $pq$ \cr
	$ 35$ & $25p$ (if $p = 5$), $5p^2$ (if $p = 7$), or $pqr$ & 
		$ 44$ & $4p^2$ (if $p = 11$) or $4pq$ \cr
	$ 37$ & $p^2$ (if $p = 37$) or $pq$ & 
		$ 46$ & $2p^2$ (if $p = 23$) or $2pq$ \cr
	$ 38$ & $2p^2$ (if $p = 19$) or $2pq$ & 
		$ 47$ & $p^2$ (if $p = 47$) or $pq$ \cr
	$ 39$ & $9p$ (if $p = 3$), $3p^2$ (if $p = 13$), or $pqr$ & 
		& \cr
\noalign{\hrule}
\end{tabular}
\end{center}

\pref{CanAlsoAssume-G'3}
The commutator subgroup of~$G$ is a subgroup of $\ZZ_p \rtimes_\tau \quot G'$, so its order is a divisor of $p \, |\quot G'|$. Therefore, if $|[\quot G, \quot G]| \le 2$, then $|[G,G]|$ is either $1$, $2$, $p$, or~$2p$. (Furthermore, if $p = 2$, then $\tau$ must be trivial, so $G = \ZZ_2 \times \quot G$, which implies that $[G,G] = [\quot G, \quot G]$.) So \cref{G'=pq} establishes that every connected Cayley graph on~$G$ has a hamiltonian cycle.

\pref{CanAlsoAssume-G'4abel}
As in \pref{CanAlsoAssume-G'3}, if $\tau$ is trivial, then $G = \ZZ_p \times \quot G$, so $[G,G] = [\quot G, \quot G]$. Therefore, \fullcref{G'=pq}{primepower} provides a hamiltonian cycle in every Cayley graph on~$G$ if $|[\quot G, \quot G]|$ is prime. (In particular, if $|[\quot G, \quot G]| < 4$.)
\end{proof}

\begin{rem}
If we apply \fullcref{CanAlsoAssume}{k}, then the proof of \cref{kp} requires hamiltonian connectivity/laceability only for Cayley graphs of the orders listed in \fullcref{CanAlsoAssume}{k}, not the full strength of \cref{hamconn<64}.
\end{rem}

The computations to justify \cref{hamconn<64} could be shortened a bit by applying the following interesting result:

\begin{thm}[{}{Chen-Quimpo \cite{ChenQuimpo}}] \label{ChenQuimpo}
Assume $\Cay(G;S)$ is a connected Cayley graph. If $G$ is abelian, and the valence of\/ $\Cay(G;S)$ is at least three, then $\Cay(G;S)$ is either hamiltonian connected or hamiltonian laceable.
 \end{thm}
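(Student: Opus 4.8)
The plan is to prove the two conclusions together by induction, treating hamiltonian connectivity and laceability as one ``strong hamiltonicity'' property that is sensitive to the bipartition, and building $\Cay(G;S)$ up one generator at a time. First I would record the combinatorial dichotomy: since $G$ is abelian, $\Cay(G;S)$ is bipartite exactly when there is a homomorphism $\chi\colon G\to\ZZ_2$ with $\chi(s)=1$ for every $s\in S$, and then the colour classes are $\ker\chi$ and its nontrivial coset. In the non-bipartite case the goal is a hamiltonian path between every pair of distinct vertices, and in the bipartite case between every pair of oppositely coloured vertices. I would also note the monotonicity that discarding a redundant generator only deletes edges: if the spanning subgraph from a smaller generating set is already hamiltonian connected we are done, and if it is laceable and $\Cay(G;S)$ is still bipartite we are again done. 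The one genuinely new situation is a laceable subgraph whose ambient graph is non-bipartite --- the ``nonbipartite extension'' phenomenon already discussed after \cref{hamconn<64} --- which must be handled directly rather than by reduction.

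It therefore suffices to treat irredundant generating sets (together with the single extra edge-orbit that destroys bipartiteness), and I would induct on the number $r$ of generators counted up to inversion. The base case is valence exactly three, which forces either $r=2$ with one generator of order~$2$ and one of order $\ge 3$, or $r=3$ with three involutory generators. Up to isomorphism these are the prisms $C_n\mathbin{\square}K_2\iso\Cay(\ZZ_n\times\ZZ_2;\{(1,0),(0,1)\})$, the M\"obius ladders $\Cay(\ZZ_{2n};\{1,n\})$, and the cube $Q_3\iso\Cay(\ZZ_2^3)$. For each family I would exhibit explicit hamiltonian paths: laceability when bipartite (prisms with $n$ even, and $Q_3$) and connectivity otherwise (prisms with $n$ odd, and the M\"obius ladders, which always contain an odd cycle). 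This base case is finite in description and entirely constructive.

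For the inductive step ($r\ge 3$, or valence $\ge 4$) I would single out one generator $s$, set $K=\langle s\rangle\iso\ZZ_m$, and view $\Cay(G;S)$ through the cosets of~$K$: each coset carries an $m$-cycle of $s$-edges, and the remaining generators knit these cycles together exactly as the edges of the smaller graph $\Cay(G/K;\quot S)$ do, so that $\Cay(G;S)$ is a cyclic cover of $\Cay(G/K;\quot S)$. By the inductive hypothesis the base graph is hamiltonian connected or laceable, so given targets $u,w$ I would route a hamiltonian path in the base with endpoints the images $\quot u,\quot w$ and then lift it, traversing each fibre cycle and using the $s$-edges to advance the ``offset'', in the same spirit as the lifting in \cref{FGL}. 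The freedom in how each fibre cycle is threaded is then spent to place $u$ and $w$ at the prescribed positions within their fibres and to realise the correct parity.

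The main obstacle is precisely this endpoint control: ordinary hamiltonicity only asks that the lifted walk visit everything, whereas strong hamiltonicity demands that \emph{arbitrary} prescribed endpoints $u,w$ be hit, which simultaneously constrains the fibre, the position inside the fibre, and (in the bipartite case) the colour. The crux is thus a careful fibre-threading lemma, showing that a base hamiltonian path with prescribed endpoints lifts to one whose endpoints occupy any admissible pair of fibre positions. I expect the degenerate situations --- where quotienting by~$K$ drops the base valence below three (so the inductive hypothesis does not apply verbatim) or where $m=2$ (so fibres are single edges rather than cycles) --- to require separate, more delicate arguments, and these, together with the bipartite colour bookkeeping, to absorb most of the work.
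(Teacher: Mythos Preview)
The paper does not give a proof of this theorem: it is quoted as a known result of Chen and Quimpo, with a citation to their 1981 paper, and appears only in \cref{ReduceCases} as one of several outside results that \emph{could} be used to shorten the computer calculations. There is therefore no argument in the paper to compare your proposal against.

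As for the proposal itself, the overall architecture --- reduce to irredundant $S$ (plus one ``nonbipartite extension'' generator), induct on the number of generators, and lift hamiltonian paths through the cyclic cover $\Cay(G;S)\to\Cay(G/\langle s\rangle;\quot S)$ --- is the standard line of attack and is essentially how Chen and Quimpo proceed. Your base-case classification (prisms, M\"obius ladders, $Q_3$) is correct for irredundant abelian valence-$3$ graphs. The honest gaps you flag are exactly the substantive ones: the ``fibre-threading lemma'' with prescribed endpoints is the entire content of the inductive step, and the degenerate cases where $\Cay(G/\langle s\rangle;\quot S)$ drops to valence~$\le 2$ (e.g., when $\quot S$ collapses to a single generator, so the base is a cycle) cannot be swept under the rug --- they require a direct construction rather than an appeal to the inductive hypothesis. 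So what you have is a sound plan whose missing pieces are precisely the hard parts of the original proof; it is not yet a proof, but nothing in it is wrong-headed.
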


\end{document}